\newtheorem{theorem}{Theorem}
\newtheorem{prop}[theorem]{Proposition}
\newtheorem{cor}[theorem]{Corollary} 
\newtheorem{lemma}[theorem]{Lemma}
\renewcommand{\d}{\partial}
\DeclareMathOperator{\sech}{sech}
\DeclareMathOperator{\Ai}{Ai}
\DeclareMathOperator{\Gi}{Gi}
\DeclareMathOperator{\Hi}{Hi}
\DeclareMathOperator{\real}{real}
\DeclareMathOperator{\im}{Im}
\newcommand{\R}{\mathbb{R}}
\title{Self similar solutions to super-critical gKdV}
\author{Herbert Koch}
\begin{document}

\begin{abstract}
We construct self similar finite energy solutions to the slightly 
super-critical generalized KdV equation. These self similar solutions 
bifurcate as a function of $p$ from the soliton at the $L^2$ critical exponent $p=4$.

\end{abstract}

\maketitle

\section{Introduction} 
Let $p\ge 1$ and consider the generalized KdV equation
\begin{align} \label{gkdv}
  \partial_{t}u+ \partial_{xxx}u \pm \partial_{x}(|u|^p u) &= 0  \\
  u(0,x) &= u_{0}(x) \nonumber \end{align}
 or, for integer exponents
$p$,
\begin{align} \label{gkdv2}  
\partial_{t}u+ \partial_{xxx}u \pm \partial_{x}(u^{p+1}) &= 0  \\
  u(0,x) &= u_{0}(x) \nonumber
\end{align} 
Both the KdV equation
($p=1$,\eqref{gkdv2}) and the mKdV equation ($p=2$,\eqref{gkdv2}) are integrable and in these
cases a remarkable amount of information can be obtained by the
inverse scattering machinery.  Both cases \eqref{gkdv}, (with $+$) and
\eqref{gkdv2} (either $p$ odd or $+$) admit soliton solutions $u(x,t) = Q_p(x-t)$ where
\begin{equation} 
  Q_p(x) =
 \left( \frac{p+2}2 \right)^{\frac1{p}}\sech^{\frac{2}p}\left(\frac{px}2\right). 
\end{equation}  
The quantities 
\[ 
\int_{}{ u dx},\quad \int_{}{ u^2 dx} \quad \mbox{and} \quad
\int_{}{\frac12 u_x^2- \frac1{p+2} |u|^{p+2}dx} 
\]
 are conserved. The
equations are invariant under translations in space and time, and
under the scaling
\[ u_\lambda (x,t) = \lambda^{2/p} u(\lambda x , \lambda^3 t), \]
which shows that the homogeneous Sobolev spaces $\dot H^s$ with index
 $s = 1/2 - 2/p$ are scale invariant; the quintic gKdV equation ($p=4$) 
has $L^2$ as critical space.

Kenig, Ponce and Vega \cite{MR1211741} prove local existence to \eqref{gkdv2} 
in the scaling critical  Sobolev space for all integers $p \ge 4$ and global existence for $ p=1,2,3$. This has been extended to critical Besov spaces 
by Molinet and Ribaud in \cite{MR2015413} and by Strunk \cite{MR3117359}  to \eqref{gkdv} for real  $p>4$.
This raises the question concerning global existence
and blow up in the critical and the supercritical case $p\ge 4$.
Numerical simulations by Dix and McKinney \cite{MR1664756} suggest
that there is self similar blow up in the supercritical case. In
contrast to the situation for NLS there is neither a virial identity
argument in the style of Glassey nor the explicit formula given by the
pseudo-conformal transformation. Nonetheless, Martel and Merle, and Martel, Merle and Rapha\"el showed
in a series of papers \cite{MR1896235,MR1888800,MR1944576,MR2055847,arxiv1204.4624,arxiv1204.4625,arxiv1209.2510} 
that in the $L^2$ critical case there are solutions which blow up
along the soliton manifold, i.e. the spatial scale of the solution
tends to zero in finite time.  Probably one of the earliest and most
prominent prediction of blow-up respectively wave collapse is due to Zacharov, Kuznetsov and Musher 
\cite{ZKM} for the (super critical) cubic focussing NLS in three
dimensions who write {\em Numerical simulations indicate that for d = 3 there
  is self-similar and spherically symmetric blow-up, even from
  non-symmetric initial data}. The blow-up mechanism for the nonlinear
Schr\"odinger equation is described in  detail in the book by
Sulem and Sulem \cite{MR1696311}. Indeed, Zakharov
\cite{za} predicted blow-up of the form
\[ 
 \frac{1}{L(t)}\exp{\big( \frac{1}{\tau(t)} \big)}Q\left(\frac{|x|}{L(t)};a
\right)
\]
for some selfsimilar profile $Q$ and the scaling parameters
\[  
L(t) = (2a(t^{*}-t))^{1/2} \qquad \mbox{and} \qquad \tau(t) = \frac{1}{2a}
\log{\left(\frac{t^*}{t^*-t}\right)},
\]
where $a>0$ is a specific parameter and $t^*$ is the time at which
blow-up occurs.  There seem to be solutions for each dimension $2 < d
\leq 3$ for one unique $a(d)$ and heuristic arguments in
\cite{MR966356} derive a relationship of the form
$$ d-2 \sim \frac{1}{a}\exp{\left(-\frac{\pi}{a}\right)}.$$

It seems that the first fully rigorous construction of self-similar
blow-up solutions is due to Kopell and Landman \cite{MR1349311} for
the cubic NLS in $\mathbb{R}^{2+\varepsilon}$ (which has to be
understood in the sense of existence of a solution to the nonlinear
ODE into which the dimension enters merely as a parameter).  It is
crucial that these solutions are in $\dot H^1 \cap L^{p+2}$ and hence
their energy vanishes.

Self similar solutions for gKdV have been constructed by Bona and
Weissler \cite{MR1705463}.  Their solutions are not in $\dot H^1$ and
their relation to the blow-up observed in simulations is not
clear. Such solutions can be obtained by evolving small self-similar
initial data, like for Navier-Stokes, or wave maps (Shatah et al).

In 2009, Merle, Raphael and Szeftel \cite{MR2729284} established blow
up from smooth initial data for NLS in the slightly super critical
case in low dimensions, heuristically {\em bifurcating} from the
soliton.

Here we construct selfsimilar solutions to the generalized KdV
equation for $p$ slightly larger than $4$ (Theorem \ref{selfsimilarsolution}). 
Moreover in Theorem \ref{im} we construct an almost invariant
manifold containing the solitons and the selfsimilar solutions, which
will play a central role in resolving the dynamic bifurcation at
$p=4$, together with fairly precise estimates in Theorem \ref{inverse} 
for the constructed
functions and their derivatives with respect to all parameters.

In Section \ref{sec:bifurcation} we formulate the bifurcation problem
and state the main technical result, Theorem \ref{inverse}, and the main
consequences, the existence result of Theorem \ref{selfsimilarsolution}
and  Theorem \ref{im}.
In Section \ref{sec:reduction} we deduce Theorem
\ref{selfsimilarsolution} and Theorem \ref{im} from Theorem \ref{inverse}.  
This reduction is elementary but  conceptually interesting.

The generalized Airy and Scorers functions are  studied in
Section \ref{sec:scorer} with standard arguments: Stationary phase,
contour integrals, and explicit formulas of Fourier transforms of
homogeneous functions.  The next Section \ref{sec:fundamental} derives
explicit formulas for a unique Green's function for the linear
part.

The next step (Section \ref{sec:weighted}) consists in a study of
estimates for integral operators with integral kernel related to the
Green's function. After this preparation we set up the inverse function
theorem in Section \ref{sec:weighted}.  Due to the weigths
differentiability with respect to $a$ is not
immediate. We approach it after establishing a fairly precise
asymptotic expansion (Section \ref{sec:expansion}) for the solution
constructed by the inverse function theorem.

The final section shows plots of numerically computed self similar solutions
for various values of $a$ and $p$ due to Strunk \cite{strunk}. I want to thank 
Nils Strunk for allowing me to include this data and S. Steinerberger for 
many discussions.

\section{The bifurcation problem} 
\label{sec:bifurcation} 
We search for self-similar solutions $\psi(t,x)$ of the form
\begin{equation} \label{ansatz} \psi(t,x) =
  (3t)^{-\frac2{3p}}v\left(\frac{x}{(3t)^{1/3}} \right),
\end{equation}
for which the self similar profile $v$ has to satisfy
\begin{equation} \label{selfsimilar}
\frac2p v + y v_y - v_{yyy}  - (|v|^{p}v)_y = 0.
\end{equation} 
A change of coordinates leads to a formulation in which the bifurcation 
from the soliton equation becomes visible:
let $a>0$, 
\[ y = a^{1/3}(x+ a^{-1}), \qquad  u(x) =   a^{\frac2{3p}} v( a^{1/3} (x + a^{-1})) \]
then \eqref{selfsimilar} is equivalent to 
\begin{equation} \label{bifurcationa} 
a \left( \frac2p u + x u_x\right) - u_{xxx} + u_x - (|u|^{p}u)_x = 0. 
\end{equation}  
Reversing the derivation, if $u$ satisfies \eqref{bifurcationa} then
\begin{equation} v(y) = a^{-\frac2{3p}} u(a^{-1/3} y -
  a^{-1}) \label{uv} \end{equation} 
is a solution for
\eqref{selfsimilar} and we thus get via \eqref{ansatz} a self-similar
solution for \eqref{gkdv}.  We will construct self similar solutions
in $L^{p+2}$ with derivative in $L^2$. Since for any solution of gKdV
the quantity
\[ \int_{\mathbb{R}}{\frac{1}{2}u_x^2 - \frac{1}{p+2}|u|^{p+2}dx}\]
is formally conserved, plugging the ansatz into gKdV one sees  that the existence of the
integral already implies it being 0 for all times.

For $a=0$ the equation simplifies to the derivative of the soliton equation
\begin{equation} \Big(- u_{xx} + u - (|u|^{p}u)\Big)_x =
  0, \end{equation} 
which motivates searching for a branch of
solutions bifurcating from the soliton $Q_{p}$ using $a$ as
bifurcation parameter (see also Sulem and Sulem \cite{MR1696311}).

This is not yet the complete picture and complications arise from the
linearization around the soliton
\begin{equation} 
\label{opl}  L \psi := -\psi_{xx} + \psi - (p+1) Q_p^p \psi 
\end{equation} 
being elliptic but not invertible. Its spectrum, however, is
explicitly known: there is a ground state $Q_p^{\frac{p}2+1}$ and the
second eigenvalue is 0 with an eigenspace spanned by $Q_p'$.  We
search $p$ and $u$ as functions of $a$. This requires an additional
normalization which we choose to be
\begin{equation} \label{orth} \langle u, Q'_p \rangle = 0. \end{equation}
Our considerations lead to the bifurcation formulation
\begin{equation} \label{bifurcation}
 a \left( \frac2p u + x u_x
  \right) - u_{xxx} + u_x - (|u|^{p}u)_x+ \langle Q'_p,u \rangle Q_p'' =  0.
\end{equation}   
It will be useful to consider a generalization which will give an approximate 
invariant manifold which contains both, the solitons, and the selfsimilar blow up solutions.
We consider  

\begin{equation} \label{bifurcationsigma}
 a \left( (1+\gamma)  u + x u_x
  \right) - u_{xxx} + u_x - (|u|^{p}u)_x+ \langle Q'_p,u \rangle Q''_p =  0.
\end{equation}

\begin{theorem}\label{inverse}  
Let $q >4$. There exists $\varepsilon > 0$ and a unique map 
\[ u \in C^\infty\left([0,\varepsilon)\times (-\frac12-\varepsilon, -\frac12+\varepsilon)\times (3,q)\times \R  \right)  \]
with the following properties: 

\begin{equation} 
u_{a,\gamma,p}(x) \text{ satisfies \eqref{bifurcationsigma} for }
  0\le a < \varepsilon, |\frac12 +\gamma|  < \varepsilon, 2\le  p\le  q
\end{equation}
\begin{equation}\label{point}
  \sup_{a,\gamma,p,x} (1+a|x|)^{1+\gamma} |u_{a,\gamma,p}(x) | < \infty, 
 \end{equation} 
\begin{equation}
  \sup_{a,\gamma,p,x}   (1+ x_+)^{1-k} | \partial_\gamma^n   \partial_p^m \partial_a^k u_{a,\gamma, p}(x)| < \infty 
\end{equation} 
for $k \ge 1$,
\begin{equation}
 u_{0,\gamma,p}(x) = Q_p(x) 
\end{equation} 
\begin{equation}\label{ener}  u_{a,\gamma,p}(x) >0, \qquad \partial_x u_{a,\gamma,p}\in L^2(\R)
\end{equation} 
The solution $u_{a,\gamma,p}$ is the unique solution 
to \eqref{bifurcationsigma} satisfying \eqref{point} and \eqref{ener} 
in a small neighborhood of the soliton. 
\end{theorem}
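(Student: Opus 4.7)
The plan is an implicit function / fixed point argument around the soliton. Writing $u = Q_p + w$ and using that $Q_p$ solves $-Q''_p + Q_p - Q_p^{p+1} = 0$ (hence its $x$-derivative) together with $\langle Q'_p, Q_p\rangle = 0$ (odd vs even), equation \eqref{bifurcationsigma} reduces to
\[ \mathcal{L}_{a,\gamma,p} w \;=\; -a\bigl((1+\gamma)Q_p + xQ'_p\bigr) \;-\; N(a,\gamma,p,w), \]
where $\mathcal{L}_{a,\gamma,p} w := (Lw)_x + \langle Q'_p, w\rangle Q''_p + a\bigl((1+\gamma)w + xw_x\bigr)$ with $L := -\partial_x^2 + 1 - (p+1)Q_p^p$, and $N$ collects the quadratic-and-higher $w$-terms of $(|Q_p+w|^p(Q_p+w) - Q_p^{p+1} - (p+1)Q_p^p w)_x$. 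The source is exponentially decaying and of size $O(a)$.

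The core analytic step is to invert $\mathcal{L}_{a,\gamma,p}$. At $a=0$ the kernel of $L$ is $\mathrm{span}(Q'_p)$, and the rank-one correction $\langle Q'_p,\cdot\rangle Q''_p$ is tuned exactly so that, together with the outer derivative, the operator becomes invertible on a subspace of functions with appropriate decay; its inverse is the explicit Green's function constructed in Section \ref{sec:fundamental}. For $a>0$ the transport term $a(1+\gamma) + a x\partial_x$ is not a small perturbation, but is captured by the generalized Airy/Scorer functions of Section \ref{sec:scorer}, and the weighted integral-operator estimates of Section \ref{sec:weighted} yield a uniform-in-$a$ bounded inverse $\mathcal{L}_{a,\gamma,p}^{-1}\colon Y \to X$ in Banach spaces $X,Y$ encoding the weighted pointwise decay in \eqref{point}. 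Banach's fixed-point theorem applied to
\[ w \;=\; -\mathcal{L}_{a,\gamma,p}^{-1}\bigl[a((1+\gamma)Q_p + xQ'_p)\bigr] \;-\; \mathcal{L}_{a,\gamma,p}^{-1} N(a,\gamma,p,w) \]
then produces a unique small $w=w(a,\gamma,p)$ with $\|w\|_X = O(a)$, jointly continuous in the parameters.

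Smoothness in $\gamma$ and $p$ is routine via the implicit function theorem since these parameters enter the equation and the weights smoothly and the linearization is uniformly invertible. The main obstacle, as flagged in the introduction, is differentiability in $a$: formal differentiation shows $\partial_a u$ satisfies a linear equation whose source contains $xu_x$, degrading the weight by one power of $x$ per $a$-derivative — exactly the loss $(1+x_+)^{1-k}$ in the statement. The strategy is to first construct the asymptotic expansion
\[ u_{a,\gamma,p} \;\sim\; Q_p + a\,u_1 + a^2\,u_2 + \cdots \]
in Section \ref{sec:expansion}, each $u_j$ solving $\mathcal{L}_{0,\gamma,p} u_j = F_j$ with an explicit polynomial-in-$x$ source and therefore growing like $|x|^{j-1}$; subtracting a partial sum from $u_{a,\gamma,p}$ leaves a remainder to which the fixed-point argument can be re-applied in a space whose weight loses only as many powers of $x$ as have been accounted for, permitting iterated differentiation in $a$.

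The remaining properties then fall out. Positivity $u_{a,\gamma,p}>0$ follows from $u_{0,\gamma,p}=Q_p>0$, continuity in $a$, and the $L^\infty$ smallness of $w$ implied by \eqref{point}. The integrability $\partial_x u \in L^2$ comes from the conservation-law identity mentioned after \eqref{uv}: with the decay in \eqref{point} all boundary terms vanish when testing \eqref{bifurcationsigma} against appropriate multipliers, forcing $\int(\tfrac12 u_x^2 - \tfrac{1}{p+2}|u|^{p+2})\,dx = 0$ and hence $u_x \in L^2$ from the pointwise control of $u$. Local uniqueness is built into the contraction: any competitor satisfying \eqref{point} and \eqref{ener} with $u-Q_p$ small in $X$ must coincide with the constructed $u_{a,\gamma,p}$.
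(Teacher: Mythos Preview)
Your overall architecture matches the paper's: reformulate as a fixed point using the exact Green's function of the full linear operator (built from the generalized Airy/Scorer functions), prove uniform boundedness of the inverse in the weighted spaces of Section~\ref{sec:weighted}, contract near $Q_p$, and then address differentiability in $a$ via asymptotics. Three points, however, either diverge from the paper or contain gaps.

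\emph{The expansion for $a$-differentiability.} The paper does \emph{not} use a Taylor series $u\sim Q_p + a u_1 + a^2 u_2+\cdots$ with polynomially growing $u_j$. Instead it identifies uniform-in-$a$ asymptotics
\[
u \approx (c_0+c_1)\,\Hi_\gamma\!\bigl(a^{-2/3}(1+ax)\bigr) + Q \quad (x\le 0),\qquad
u \approx a\,d_1\,\Gi_\gamma\!\bigl(a^{-2/3}(1+ax)\bigr) + Q \quad (x\ge 0),
\]
subtracts these special-function pieces, and differentiates them explicitly in $a$ and $\gamma$ (Propositions~\ref{firstest} and~\ref{sharpdecay}). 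The point is that the fixed-point map is \emph{not} Fr\'echet differentiable in $a$ on the weighted spaces (the weights $w^a$ themselves depend on $a$, with a transition at $x=-a^{-1}$), so a Taylor-in-$a$ scheme at $a=0$ is not directly available; one must first peel off the non-smooth part in closed form. Your polynomial $u_j$ amounts to Taylor expanding $\Gi_\gamma(a^{-2/3}(1+ax))$ at fixed $x$, which is not uniform at scale $|x|\sim a^{-1}$.

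\emph{Positivity.} The argument ``$Q_p>0$ plus $w$ small'' fails in the tails: $Q_p$ decays exponentially while $w=u-Q_p$ only obeys the polynomial bound \eqref{point}, so $|w|$ dominates $Q_p$ for large $|x|$. The paper obtains positivity from the asymptotics above: for $x\le 0$, $u$ is comparable to a positive multiple of $\Hi_\gamma>0$ (see \eqref{posleft}); for $x\ge 0$, $u$ is bounded below by $c^{-1}(e^{-x}+a(1+ax)^{-1-\gamma})>0$ (see \eqref{posright}).

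\emph{$\partial_x u\in L^2$.} The energy-conservation argument is both circular (it presupposes $u_x\in L^2$) and inapplicable: \eqref{bifurcationsigma} is not the self-similar profile equation unless $\gamma=\tfrac2p-1$ and $\langle u,Q'_p\rangle=0$, so no energy identity is available for general $(a,\gamma,p)$. The paper simply reads off $u_x\in L^2$ from the integral representation and the kernel decay at the start of Section~\ref{sec:expansion}.
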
 

The main results are consequences. 

\begin{theorem} \label{selfsimilarsolution}
  There exists $\varepsilon >0$ and a unique function $p  \in
  C^\infty([0,\varepsilon))$ with
 \[ 
 p(0)=4, \qquad \frac{d p}{da}(0)  = 
\frac{\Vert Q \Vert_{L^1}^2} {\Vert Q \Vert_{L^2}^2} = \frac{\Gamma(1/4)^4}{4 \pi^2} \sim 4.3768 \dots  
\]
such that $x \to u_{a,\frac2{p(a)}-1, p(a)}(x)$ is a solution to \eqref{bifurcationa} with 

\begin{equation} 
\partial_x u_{a,\frac2{p(a)}-1, p(a)} \in L^2,   \quad  \sup (1+a|x|)^{1+\gamma} |u_{a,p(a) } | <\infty 
\end{equation} 
and
\begin{equation} 
 E(u_{a,\frac{2}{p(a)}-1,p(a)}) := \int \frac12
  (\partial_{x}u_{a,\frac{2}{p(a)}-1, p(a) })^2- \frac1{p+2} |u_{a,\frac2{p(a)}-1, p(a) }|^{p+2} dx = 0
\end{equation}   
\end{theorem}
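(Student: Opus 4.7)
The plan is to apply the implicit function theorem to the orthogonality constraint that turns equation \eqref{bifurcationsigma} into \eqref{bifurcationa}: with $\gamma = \tfrac{2}{p(a)} - 1$ the linear coefficient already matches, so what remains is to enforce $\langle Q_p', u\rangle = 0$, which kills the last term in \eqref{bifurcationsigma}. Accordingly, I would define
\[ F(a, p) := \langle Q_p', u_{a, 2/p-1, p}\rangle, \]
which is smooth on $[0, \varepsilon) \times (3, q)$ by Theorem \ref{inverse}. Since $u_{0, \gamma, p} = Q_p$, one has $F(0, p) = \tfrac12 \int (Q_p^2)'\, dx = 0$ for every $p$, so by Hadamard's lemma $F(a, p) = a\, \tilde F(a, p)$ with $\tilde F$ smooth; the problem reduces to solving $\tilde F(a, p) = 0$.

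To compute $\tilde F(0, p) = \partial_a F(0, p)$, set $\dot u := \partial_a u_{a, 2/p-1, p}\big|_{a=0}$ and differentiate \eqref{bifurcationsigma} at $(a, u) = (0, Q_p)$ to get
\[ \tfrac{2}{p}\, Q_p + x Q_p' + (L \dot u)' + \langle Q_p', \dot u\rangle Q_p'' = 0, \]
with $L$ as in \eqref{opl}. Pairing with $Q_p$ in $L^2$ and integrating by parts, using $\int x Q_p' Q_p \, dx = -\tfrac12\|Q_p\|_{L^2}^2$, $\langle (L \dot u)', Q_p\rangle = -\langle \dot u, L Q_p'\rangle = 0$, and $\langle Q_p'', Q_p\rangle = -\|Q_p'\|_{L^2}^2$, yields
\[ \tilde F(0, p) = \left(\frac{2}{p} - \frac{1}{2}\right) \frac{\|Q_p\|_{L^2}^2}{\|Q_p'\|_{L^2}^2}. \]
This vanishes precisely at $p = 4$, where $\partial_p \tilde F(0, 4) = -\tfrac{1}{8}\|Q\|_{L^2}^2/\|Q'\|_{L^2}^2 \neq 0$, so the implicit function theorem produces a unique smooth branch $a \mapsto p(a)$ with $p(0) = 4$ and $\tilde F(a, p(a)) = 0$. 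The function $u(x) := u_{a, 2/p(a)-1, p(a)}(x)$ then solves \eqref{bifurcationa}, and positivity together with $u_x \in L^2$ are inherited from \eqref{ener}.

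For the explicit value of $p'(0)$, differentiate $F(a, p(a)) = 0$ twice at $a = 0$; since $F(0, \cdot) \equiv 0$ forces $\partial_p F(0, \cdot) \equiv 0$, one obtains
\[ p'(0) = -\frac{\partial_a^2 F(0, 4)}{2\, \partial_a \partial_p F(0, 4)}. \]
The denominator is immediate from the calculation above. The numerator requires the second order coefficient $\ddot u := \partial_a^2 u_{a, -1/2, 4}\big|_{a=0}$, obtained by pairing the $O(a^2)$ terms of \eqref{bifurcationsigma} against $Q = Q_4$, solving the linearized equation from the previous paragraph for $\dot u$, and using the explicit formula $Q_4(x) = 3^{1/4} \sech^{1/2}(2x)$ together with standard gamma function identities to evaluate the resulting soliton integrals; this is what produces the stated constant $\|Q\|_{L^1}^2/\|Q\|_{L^2}^2 = \Gamma(1/4)^4/(4\pi^2)$. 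This explicit evaluation is the main obstacle in the proof.

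Finally, the vanishing of $E$ follows from the conservation-plus-scaling argument indicated in Section \ref{sec:bifurcation}: through \eqref{uv} and \eqref{ansatz}, $\psi(t, x) = (3t)^{-2/(3p)} v(x/(3t)^{1/3})$ solves the gKdV equation, and a direct change of variables gives
\[ E[\psi(t)] = (3t)^{-(p+4)/(3p)} E[v], \qquad E[v] = a^{-(p+4)/(3p)} E[u]. \]
Since $E$ is conserved in time along gKdV but the scaling exponent $-(p+4)/(3p)$ is nonzero, this forces $E[v] = 0$, and therefore $E[u] = 0$. Finiteness of the integrals defining $E[u]$ follows from \eqref{ener} for the $L^2$ norm of $u_x$ and from the pointwise decay \eqref{point} for $\|u\|_{L^{p+2}}^{p+2}$, since $(p+2)(1+\gamma) = 2 + 4/p > 1$.
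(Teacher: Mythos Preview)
Your argument is structurally identical to the paper's: both define the orthogonality function $F(a,p)=\langle Q_p',u_{a,2/p-1,p}\rangle$ (the paper calls it $\eta$), observe it vanishes at $a=0$, divide by $a$ (your Hadamard lemma, the paper's $g=\eta/a$), and apply the implicit function theorem at $(0,4)$. Your computation of $\tilde F(0,p)=\partial_aF(0,p)$ via pairing with $Q_p$ is correct and equivalent to the paper's pairing of the integrated equation with $Q_x$; the energy argument is likewise the same.

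The one substantive gap is the computation of $\partial_a^2F(0,4)$, which you acknowledge as the main obstacle but do not carry out. Your suggestion to ``solve the linearized equation for $\dot u$'' and then evaluate explicit soliton integrals is in principle feasible but is not what the paper does, and misses the key idea. The paper never solves for $\dot v$. Instead, at $\gamma=-\tfrac12$ (where $\langle\dot v,Q_x\rangle=0$) it pairs the first-order equation $\tfrac12 Q+xQ_x+(L\dot v)_x=0$ with $\dot v$ itself. After integration by parts this produces the boundary contribution $\int\dot v_x\dot v=\tfrac12\dot v(+\infty)^2$, and the crucial observation is that $\dot v$ has a \emph{nonzero} limit at $+\infty$: since $L\dot v\to\tfrac12\int Q$ and $(-\partial_{xx}+1)^{-1}$ fixes constants, one gets $\dot v(+\infty)=\tfrac12\int Q$, hence $\int\dot v_x\dot v=\tfrac18\|Q\|_{L^1}^2$. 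This yields the identity
\[
\langle\tfrac12 Q+xQ_x,\dot v\rangle+(p+1)\langle Q^p\dot v,\dot v_x\rangle=-\tfrac18\|Q\|_{L^1}^2,
\]
which is exactly what is needed: pairing the second-order equation with $Q$ and using this identity gives $\langle\ddot v,Q_x\rangle=\tfrac14\|Q\|_{L^1}^2/\|Q_x\|_{L^2}^2$ with no further information about $\dot v$ required. This is where the $\|Q\|_{L^1}^2$ in the final answer actually comes from; it is not obtained from explicit $\sech$ integrals of $\dot u$.
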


These solutions are contained in a family of solutions which contains 
the solitons and the selfsimilar solution.

\begin{theorem}\label{im}
Let $q>4
$.
  There exists $\varepsilon >0$ and a unique function $\gamma(a,p)  \in
  C^\infty([0,\varepsilon)\times [3,q])$ with
 \[ 
 \gamma(0,4)=-\frac12, \gamma(a,p(a)) = \frac2p-1, \]
 \[ \frac{\partial \gamma}{\partial a}(0,4) = \frac18\frac{\Vert Q \Vert_{L^1}^2} {\Vert
   Q \Vert_{L^2}^2} = \frac18\frac{\Gamma(1/4)^4}{4 \pi^2} \sim
 \frac18 4.3768 \dots,
\]
\[ \frac{\partial \gamma}{\partial p }(0,4) =   0   \]
such that $x \to u_{a,\gamma(a,p), p}(x)$ is a solution to 
\begin{equation} 
a( (1+\gamma(a,p))  u + x u_x) - (u_{xx} - u + |u|^p u)_x = 0  
\end{equation}
 with 
\begin{equation} 
\partial_x u_{a,\gamma(a,p), p} \in L^2,  \quad  \sup(1+a|x|)^{1+\gamma} |u_{a,\gamma(a,p), p }| \le c . 
\end{equation} 
Moreover $u_{0,\gamma(0,p),p}=Q_p$.
\end{theorem}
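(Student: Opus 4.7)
The proof reduces Theorem \ref{im} to a scalar implicit equation solved via the implicit function theorem applied to the family produced by Theorem \ref{inverse}. The equation in Theorem \ref{im} differs from \eqref{bifurcationsigma} only by the correction $\langle Q'_p, u\rangle Q''_p$, so $u_{a,\gamma,p}$ satisfies it exactly when
\[
F(a,\gamma,p) := \langle Q'_p,\, u_{a,\gamma,p}\rangle = 0.
\]
The plan is to show $F(0,\gamma,p)\equiv 0$, factor $F = a\,G$ with $G$ smooth, and then solve $G(a,\gamma,p)=0$ for $\gamma$ as a smooth function of $(a,p)$.

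Since $u_{0,\gamma,p} = Q_p$ is even while $Q'_p$ is odd, $F(0,\gamma,p) = 0$ identically. Joint smoothness of $u$ and Hadamard's lemma then give $F = a\,G$ with $G$ smooth and $G(0,\gamma,p) = \partial_a F(0,\gamma,p)$. To compute this derivative, I would differentiate \eqref{bifurcationsigma} in $a$ at $a=0$; the resulting linear equation for $w := \partial_a u_{a,\gamma,p}|_{a=0}$ is
\[
(1+\gamma)Q_p + xQ_p' + (Lw)_x + \langle Q'_p, w\rangle Q''_p = 0,
\]
with $L$ as in \eqref{opl}. Pairing in $L^2$ with $Q_p$, using $\langle xQ_p',Q_p\rangle = -\tfrac{1}{2}\|Q_p\|_2^2$, the identity $\langle (Lw)_x,Q_p\rangle = -\langle w, LQ'_p\rangle = 0$, and $\langle Q''_p,Q_p\rangle = -\|Q'_p\|_2^2$, yields
\[
G(0,\gamma,p) = \frac{(\tfrac{1}{2}+\gamma)\,\|Q_p\|_2^2}{\|Q'_p\|_2^2}.
\]
Hence $G(0,-\tfrac{1}{2},p)\equiv 0$ and $\partial_\gamma G(0,-\tfrac{1}{2},p) > 0$ uniformly for $p\in[3,q]$.

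The implicit function theorem with parameter $p$ then produces a unique $\gamma\in C^\infty([0,\varepsilon)\times[3,q])$ solving $G(a,\gamma(a,p),p)=0$ with $\gamma(0,p)\equiv -\tfrac{1}{2}$; in particular $\gamma(0,4)=-\tfrac{1}{2}$ and $\gamma_p(0,4)=0$. The identity $\gamma(a,p(a)) = 2/p(a)-1$ is forced by uniqueness: at this value of $\gamma$ the correction term disappears, so the self-similar solution produced by Theorem \ref{selfsimilarsolution} is a root of $G(a,\cdot,p(a))$ close to $-\tfrac{1}{2}$ and must coincide with the implicit function theorem branch. Differentiating the identity $\gamma(a,p(a)) = 2/p(a)-1$ at $a=0$ and using $\gamma_p(0,4)=0$ together with $p'(0) = \|Q\|_{L^1}^2/\|Q\|_{L^2}^2$ from Theorem \ref{selfsimilarsolution} reads off the stated value of $\gamma_a(0,4)$.

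The weighted pointwise bound and $\partial_x u\in L^2$ transfer directly from Theorem \ref{inverse}, and the substantive analytic work sits entirely inside that theorem. The only delicate point at this stage is ensuring that the orthogonality functional has nontrivial first-order dependence on $\gamma$ once the trivial $a=0$ zero of $F$ is factored out; this is the role of the spectral computation above, which uses $LQ'_p=0$ and the evenness of $Q_p$ in an essential way.
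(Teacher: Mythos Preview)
Your argument is correct and mirrors the paper's own: define the orthogonality functional $F(a,\gamma,p)=\langle Q_p',u_{a,\gamma,p}\rangle$ (the paper calls it $\eta$), use $u_{0,\gamma,p}=Q_p$ to see it vanishes at $a=0$, factor $F=aG$, compute $G(0,\gamma,p)=(\gamma+\tfrac12)\|Q_p\|_{L^2}^2/\|Q_p'\|_{L^2}^2$ by differentiating \eqref{bifurcationsigma} and pairing, and apply the implicit function theorem in $\gamma$.

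The one genuine difference is how you obtain $\partial_a\gamma(0,4)$. The paper computes $\partial_a^2\eta(0,-\tfrac12,4)$ directly via the second linearized equation and the identity \eqref{idinfty}, which requires the somewhat delicate evaluation $\lim_{R\to\infty}\tfrac12\dot v(R)^2=\tfrac18\|Q\|_{L^1}^2$. You instead read $\partial_a\gamma(0,4)$ off the consistency relation $\gamma(a,p(a))=2/p(a)-1$ together with $\partial_p\gamma(0,4)=0$ and the value of $p'(0)$ from Theorem~\ref{selfsimilarsolution}. Your route is shorter and avoids the second-order computation entirely, at the cost of relying on Theorem~\ref{selfsimilarsolution} as input; the paper's route is self-contained. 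Note that differentiating $\gamma(a,p(a))=2/p(a)-1$ literally yields $\partial_a\gamma(0,4)=-\tfrac18\,p'(0)$, which agrees in magnitude with the paper's direct computation via $-g_a/g_\gamma$; the positive sign printed in the statement appears to be a typo.
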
  

In the process of proving Theorem \ref{inverse} we obtain fairly precise asymptotics 
for  the constructed solutions. This asymptotics can be expressed concisely 
in terms of the special functions $\Hi_\gamma$ and $\Gi_\gamma$ constructed in 
Section \ref{sec:scorer}.

\section{Theorem \ref{inverse} implies Theorem \ref{selfsimilarsolution} and Theorem \ref{im}  }
\label{sec:reduction}

\subsection{The soliton} 

We recall that solitons $Q$ satisfy, possibly after rescaling,  
\begin{equation} \label{soliton} -Q_{xx} + Q - |Q|^p Q =
  0. \end{equation} 
There is a unique solution, up to the choice of sign and a translation parameter. 
We denote by $Q$ (or $Q_p$)  the unique symmetric and nonnegative solution. 
 We multiply by $Q$ and $xQ_x$, respectively, and integrate to
obtain the identities
\begin{equation}
  \int Q_x^2+ Q^2- Q^{p+2} dx = 0 =
 \int \frac12 Q_x^2- \frac12 Q^2+ \frac1{p+2}Q^{p+2} dx. 
\end{equation} 
This implies 
\begin{equation} 
\Vert Q \Vert_{L^{p+2}}^{p+2} = \frac{2(p+2)}{p+4} \Vert Q \Vert_{L^2}^2 , 
\quad \Vert Q_x \Vert_{L^2}^2 =  \frac{p}{p+4} \Vert Q \Vert_{L^2}^2 
\end{equation} 
and hence 
\begin{equation} 
  E(Q) = \int  \frac12 Q_x^2 - \frac1{p+2} Q^{p+2} dx = 
 \frac{p-4}{2(p+4)} \Vert Q \Vert_{L^2}^2  
\end{equation} 
from which we see that the energy vanishes if $p=4$. Let $Q_c (x) =
c^{-2/p} Q(x/c)$, which is a rescaling of the soliton so that
\begin{equation} \label{eq:travelling}  u(x,t) = Q_c(x-c^2 t)   \end{equation} 
is a traveling wave solution to the gKdV equation with speed $c^2$. Then 
\[ 
\tilde Q_c := 
- c\frac{\partial}{\partial c } Q_c = \frac2p Q_c + \frac{x}{c} Q_c'
 \]
satisfies  
\[ \Vert Q_c \Vert_{L^2}^2 = c^{1-\frac4p} \Vert Q \Vert^2_{L^2}, \]
hence, using the notation
$\tilde Q = \tilde Q_{1}$,
\begin{equation} 
  \langle \tilde Q, Q \rangle = -\frac12 \left.  \frac{d}{dc}  
\Vert Q_c \Vert_{L^2}^2  \right|_{c=1} = 
\left( \frac2p-\frac12\right) \Vert Q \Vert^2_{L^2} 
\end{equation} 
which changes sign as $p$ passes through $4$.  We differentiate \eqref{eq:travelling} with respect to $c$,  evaluate at $c=1$ and
obtain a solution to the linearized  equation,  
hence 
\[ \frac{d}{dx}  (- 2Q - L \tilde Q ) = 0 \]
and  
\begin{equation} L \tilde Q = -2 Q. \end{equation} 

 An integration by parts gives 
\begin{equation}  
\int \tilde Q dx = \int \frac2p Q + x Q_x dx = \left( \frac2p-1\right)
\int Q dx.
\end{equation} 

\subsection{The derivatives with respect to $a$} 
Let $\dot v$ be the derivative of $u$ with respect to $a$ evaluated at
$a=0$. It decays at $-\infty$ and hence it satisfies
\begin{equation}\label{dotv} 
    L  \dot v + \langle \dot v, Q_x \rangle Q_x = 
- \int_{-\infty}^x (1+\gamma)  Q + x Q_x dy 
\end{equation} 
We multiply  by $Q_x$ (supressing $p$ in the notation) and, since  $LQ_{x} = 0$, and 
\begin{equation}\label{da} 
 \langle \dot v, Q_x\rangle = \Vert Q_x
  \Vert_{L^2}^{-2} \langle Q,\tilde Q \rangle = \left( \gamma+\frac12
  \right) \frac{\Vert Q \Vert_{L^2}^2}{\Vert Q_x \Vert_{L^2}^2}.
 \end{equation} 
 Observe that  $\langle \dot v, Q_x \rangle = 0$ if $\gamma=-\frac12$. 
The norms on the right hand side can be evaluated and this  gives the derivative of  the inner product with respect to $a$ at $a=0$ 
as a function of $p$ and $\gamma$. 

We set $\gamma=-\frac12$,  multiply \eqref{dotv}  by $\dot v$ and integrate
\[ 
\langle \frac12 Q + x Q_x, \dot v \rangle + \int \dot v_x \dot v dx
+ (p+1) \langle Q^p \dot v ,\dot v_x \rangle = 0. 
\] 
We rewrite the  middle integral as a limit
\[
 \int \dot v_x \dot v dx =    \lim_{R \to \infty}
  \int_{-\infty}^R  \dot v_x \dot v \, dx
  =   \lim_{R\to \infty} \frac12 (\dot v (R))^2
 \] 
This limit can be calculated: the inverse of  $-\partial_{xx} + 1$ is given 
by the convolution by $\frac12 e^{-|x|}$. It maps the constant function $1$ to itself, hence 
\[  
\lim_{R\to \infty} \frac12 (\dot v (R))^2  =   \frac12 (\int \tilde Q dx)^2   
 =  \frac18 \left( \int Q dx \right)^2 
\]
and  
\begin{equation}\label{idinfty}  
  \langle \frac12 Q + x Q_x, \dot v \rangle 
  + (p+1) \langle Q^p \dot v ,\dot v_x \rangle = 
-\frac18 \left( \int Q dx \right)^2. 
\end{equation}

Let $\ddot v$ be the
second derivative with respect to $a$ evaluated at $a=0$. It satisfies
\[
 2 (\frac12 \dot v + x \partial_x \dot v ) + \partial_x \left(L \ddot v -
p(p+1) Q^{p-1} \dot v^2 + \langle \ddot v, Q_x \rangle Q_x\right ) = 0
\] 
 We  fix $p=4$, multiply by $Q$ and integrate. Then, since 
\[ \langle \dot v , \frac12 Q+xQ_x\rangle+ \langle \frac12 \dot v+ x \dot v_x, Q \rangle=0, \]
 and using  \eqref{idinfty}, 
\[ 
\begin{split} 
  \Vert Q_x \Vert_{L^2}^2 \langle \ddot v, Q_x\rangle = & 2 \langle
  \frac12 \dot v+ x \partial_x \dot v, Q \rangle + 20 \int Q^3 Q_x
  \dot v^2 dx 
\\ = & - 2 \langle \dot v, \frac12 Q+ xQ_x \rangle - 10 \int
  Q^4 \dot v \dot v_x dx 
\\ = & \frac14 \Vert Q \Vert_{L^1}^2
\end{split}  
\]
and hence the second derivative of the inner product with respect to $a$ 
at $a=0$, $\gamma=-\frac12$ and $p=4$ is  given by  
\begin{equation} \label{daa} 
\langle \ddot v, Q_x\rangle = \frac14
  \frac{\Vert Q \Vert_{L^1}^2}{ \Vert Q_x \Vert_{L^2}^2}.
\end{equation}

We define the smooth function 
\[ 
(a,\gamma, p) \to \eta(a,\gamma, p) := \langle u_{a,\gamma, p},\partial_x Q_{p} \rangle 
\]
on 
$[0,\varepsilon)\times (-\frac12 - \varepsilon,
\frac12+\varepsilon) \times [2,q]$.  The orthogonality $ \langle Q ,
Q_x \rangle = 0 $ implies
\[  \eta(0,\gamma, p) =  0, \]
the derivative with respect to $a$ 
is given by \eqref{da}
\[
\frac{\partial \eta }{\partial a}(0,\gamma,p) = \left( \gamma+\frac12
\right) \frac{\Vert Q \Vert_{L^2}^2}{\Vert Q_x \Vert_{L^2}^2},
\]
hence 
\begin{equation} \frac{\partial \eta}{\partial a} (0,-\frac12, p )  = 0 \end{equation}
and 
\[ 
\frac{\partial^2\eta }{\partial a \partial \gamma  }(0,\gamma,p) =  
 \frac{\Vert Q \Vert_{L^2}^2}{\Vert Q_x \Vert_{L^2}^2}.  
\]
We read the second derivative with respect to $a$ from \eqref{daa} 
\begin{equation} 
  \frac{\partial^2\eta}{\partial a^2}(0,-\frac12 , 4) = 
 \frac14 \frac{\Vert Q \Vert_{L^1}^2} {\Vert Q_x \Vert_{L^2}^2}.
\end{equation} 

Let
\begin{equation}  g(a,\gamma, p) = \eta(a,\gamma,p)/a
\end{equation} 
which  is a smooth function with (again we suppress $p$ in the notation of $Q$) 
\begin{eqnarray*}   g(0,\gamma,p) & = &  0, 
\\ \frac{\partial g}{\partial a}(0,-\frac12,4) &  = &  
  \frac18 \frac{\Vert Q \Vert_{L^1}^2} {\Vert Q_x \Vert_{L^2}^2}
\\
\frac{\partial g} {\partial \gamma } (0,-\frac12, p) & = &   \frac{\Vert Q \Vert_{L^2}^2}{\Vert Q_x \Vert_{L^2}^2} 
\\ 
\frac{\partial g} {\partial p } (0,-\frac12, p)
& =& 0  
  \end{eqnarray*} 
and by the implicit function theorem the equation 
\[ g(a,\frac2p-1,p) = 0 \]
can be solved for $p= p(a)$ for $a \in [0,\varepsilon)$, possibly after decreasing $\varepsilon$ if necessary.
Clearly $p(0)= 4$ and 
\[
  \frac{dp}{da}(0) = 
  -\left.  \frac{ \frac{\partial g}{\partial a}}{\frac{\partial g }
 {\partial p}} \right|_{a=0,p=4}  
 = 
  - \left. \frac{ \frac{1}{2}\frac{\partial^2 \eta}{\partial a^2}}
{\frac{\partial^2 \eta}{\partial a \partial p}}  \right|_{a=0,p=4}
 =   \frac{\Vert Q \Vert_{L^1}^2} {\Vert Q \Vert_{L^2}^2}
\]
We recall
\[ Q(x) = 3^{1/4} \sech^{1/2}(2x) \]  
and thus 
\[ 
\int Q dx =
\frac{3072^{\frac{1}{4}}}{\sqrt{\pi}}\Gamma\left(\frac{5}{4}\right)^2, 
\]
\[
 \int Q^2 dx =   \frac{\sqrt{3}}{2}\pi 
\]
and hence 
\[
 \frac{dp}{da}(0) =     \frac{64}{\pi^2}\Gamma\left(\frac{5}{4}\right)^4=  \frac1{4\pi^2} \Gamma(1/4)^4 \sim 4.3768\dots.
 \]
The changes for Theorem \ref{im} are quite obvious: We solve 
\[ g(a,\gamma, p ) = 0   \]
for $\gamma(a,p)$  near $(0,-\frac12, 4)$ and obtain 
\[ \frac{\partial}{\partial a} \gamma(0,4) =  \frac1{32\pi^2} \Gamma(1/4)^4 \sim 
 0.54711 \]
and 
\[ \frac{\partial}{\partial p} \gamma(0,4) = 0. \]

\subsection{ Vanishing energy}  
The function $u_{a,\frac2{p(a)}-1,p(a)}$ satisfies
 \eqref{bifurcation} and \eqref{orth}, hence
 \eqref{bifurcationa}. Moreover
\[ |u_{a,\frac{2}{p(a)}-1,p(a)} | \le c (1+ |x|)^{-2/p} \in L^{p+2} \]
for $p \ge 1$.
By Theorem \ref{inverse} the derivative with respect to $x$ is in $L^2$.
We observed above that then the energy has to vanish.

This completes the proof that Theorem \ref{inverse} implies Theorem
\ref{selfsimilarsolution} and Theorem \ref{im}.

\section{The Airy function and Scorer's functions}
\label{sec:scorer} 
\subsection{Definition and first properties}

In this section we study a class of special functions closely related
to the Airy function. The Airy function and Scorers functions are discussed in 
\cite{MR2723248}, and the   notation is motivated by Dix
\cite{MR1479638} but with deliberate essential changes. 
We define for $\gamma \in \mathbb{C}  $ with real part larger than $-1$ 
\[ 
\begin{split} 
  \Ai_\gamma (x) = & \frac1{2\pi} \real  \int_{-\infty}^{\infty}  
(\sigma/i)^\gamma e^{i( \sigma^3/3 + x \sigma)} d\sigma \\
  = & \frac1{\pi} \int_0^\infty \sigma^\gamma 
 \cos( \frac13 \sigma^3+ x \sigma - \frac{\gamma \pi}2) d\sigma \\
  = & \frac1\pi \int_0^\infty \sigma^\gamma \left( \cos(\frac{\gamma
      \pi}2) \cos(\frac13 \sigma^3 + x \sigma) + \sin( \frac{\gamma
      \pi}2) \sin(\frac13\sigma^3 + x \sigma)\right) d\sigma.
\end{split} 
\]

Clearly  $\Ai_\gamma$ depends holomorphically on $\gamma$.

The first line of the equation defines $\Ai_{\gamma}$ through the Fourier transform. The
second line is the corresponding real formulation ( if $\gamma$ is real) and the last line
connects the definition to the slightly different ones in
\cite{MR1479638}.  We easily see that

\begin{equation}\label{Aider}  
   \Ai_\gamma' = -A_{\gamma+1} 
\end{equation} 
and  
\begin{equation} 
(1+\gamma) \Ai_\gamma + x \Ai_{\gamma}' -
  \Ai_\gamma''' = 0. 
\end{equation} 
This identity can be rewritten as
\begin{equation} (1+\gamma) \Ai_\gamma - x \Ai_{\gamma+1} +
  \Ai_{\gamma+3} = 0, \end{equation}
moreover,
\[ \Ai_0 = \Ai. \]

It is not hard to evaluate the function $\Ai_\gamma$ at $x=0$
\begin{equation}\label{Ai0} 
\begin{split} 
  \Ai_{\gamma}(0) = & \frac1{\pi} \real \int_{0}^{\infty}
  (\sigma/i)^\gamma e^{i\sigma^3/3 } d\sigma \\ = & \frac1{\pi}
  3^{\frac{\gamma-2}3}  e^{-\frac{(\gamma-2)\pi}3} \im\int_0^\infty
  \mu^{\frac{\gamma-2}3} e^{-\mu} d\mu \\ = & -\frac1{\pi} \sin(
  \frac13 \pi (\gamma-2)) 3^{\frac{\gamma-2}3}\Gamma((\gamma+1)/3 ).
\end{split} 
\end{equation} 

We work out the asymptotic behavior using the standard approach via
contour integration and stationary phase.  If $x < 0$ we apply stationary
phase and shift the contour around zero to the upper half plane
so that the leading contribution comes from the stationary point $\xi =
(-x)^{1/3}$. We obtain the leading term
\begin{equation} \Ai_\gamma 
\sim  \frac1{\sqrt{\pi}}  |x|^{-\frac14+\frac{\gamma}2}    \cos( \frac23 |x|^{3/2} - \frac{\pi}4 - \frac{\gamma \pi }2 ) 
 \end{equation} 
as $ x \to -\infty$. More precisely 
\begin{equation} 
  \Ai_\gamma(x)  
  =     \real \left\{ \left(  \frac1{\sqrt{\pi}}  |x|^{-\frac14+\frac{\gamma}2} 
 +   O( |x|^{-\frac74+ \frac{\gamma}2} )\right) e^{i (\frac23 |x|^{3/2} - \frac{\pi}4 -
 \frac{\gamma \pi }2)} \right\}  
 \end{equation} 
 as $x\to - \infty$ and  we can replace
 $O(|x|^{-\frac74+\frac{\gamma}2})$ by an asymptotic series
\begin{equation} \label{squareroot}
 |x|^{-\frac74 +\frac{\gamma}2} \sum_{j=0}^\infty  c_j |x|^{-3j/2}. 
\end{equation}  
We turn to $x >0$, shift the contour of integration to $ \mathbb{R} +
i\sqrt{x}$ and obtain again by stationary phase
\begin{equation}\label{airyasym} 
\Ai_\gamma  = 
 \left( \frac1{2 \sqrt\pi}  |x|^{-\frac14+\frac{\gamma}2}
+ O( |x|^{-\frac74+ \frac{\gamma}2} ) \right)     e^{-\frac23 x^{3/2} } 
 \end{equation} 
 as $ x \to \infty$. Again the $O(|x|^{-\frac74 +\frac{\gamma}2 })$ can be replaced by an asymptotic
 series \eqref{squareroot}.  These series can  be differentiated term by term with 
respect to $\gamma$, with the expected estimates for the difference 
of $\Ai_\gamma$ to the partial sum. 

Similarly, we set for the same set of $\gamma$
\[ 
\begin{split} 
  \Gi_\gamma (x) = & \frac{1}{\pi} \im \int_0^{\infty}
  (\sigma/i)^\gamma e^{i ( \frac13 \sigma^3+ x\sigma)} d\sigma
  \\ = & \frac1\pi \int_0^\infty \sigma^\gamma \sin( \frac13 \sigma^3 
+ x \sigma - \frac{\gamma \pi}2) d\sigma \\
  & \frac1\pi \int_0^\infty \sigma^\gamma \left( -\sin(\frac{\gamma
      \pi}2) \cos(\frac13 \sigma^3 + x \sigma) + \cos( \frac{\gamma
      \pi}2) \sin(\frac13\sigma^3 + x \sigma)\right) d\sigma.
\end{split} 
 \]
Again it is easily seen that
\[ \Gi_\gamma' = -\Gi_{\gamma+1} \]
and 
\[ (1+\gamma) \Gi_\gamma + x \Gi'_{\gamma} - \Gi'''_\gamma = 0 \]
which we can again rewrite as 
\[ (1+\gamma) \Gi_\gamma - x \Gi_{\gamma+1} + \Gi_{\gamma+3} = 0. \]

Evaluation at zero gives
\begin{equation} \label{Gi0} 
\Gi_\gamma (0)  =   \frac{1}{\pi}  \im 
\int_0^{\infty} (\sigma/i)^\gamma e^{i  \frac13 \sigma^3} d\sigma
 =   -\frac1{\pi} \cos( \frac{\pi}3  (\gamma-2)) 3^{\frac{\gamma-2}3}
\Gamma((\gamma+1)/3 ) 
\end{equation}

There are two contributions for large $x$, one from the integral near
zero and a second one from the oscillatory part. We choose a smooth cutoff
function supported in $|\sigma| \le 2$, identically $1$ in $|\sigma|
\le 1$ and we write
\[ \begin{split} 
\Gi_\gamma (x) =& \Gi_\gamma^s + \Gi_\gamma^0 
\\ =  &  \frac{1}{\pi} \im \int_0^{\infty}
 \eta(\sigma) (\sigma/i)^\gamma e^{i ( \frac13 \sigma^3+ x\sigma)}
 d\sigma 
\\ & + \frac{1}{\pi} \im \int_0^{\infty} (1-\eta(\sigma))
 (\sigma/i)^\gamma e^{i ( \frac13 \sigma^3+ x\sigma)} d\sigma.
\end{split} 
\]
Then 
\begin{equation} 
\begin{split} 
\Gi_\gamma^s(x) = & \sum_{j=0}^\infty \frac1{j!} (-1/3)^j \frac1\pi \im \int_0^\infty
  (\sigma/i)^{\gamma+3j}  e^{i x\sigma}\eta(\sigma)  d\sigma
\\ = & 
\sum_{j=0}^{\infty} \frac{(-1/3)^j}{\pi j!} \int_0^\infty (\sigma/i)^{\gamma+3j} e^{ix\sigma} d\sigma 
+ O(|x|^{-\infty}). 
\end{split} 
\end{equation} 
in the sense of oscillatory integrals. 
Now suppose that $x >0$. Then we move the contour of integration to $ i \R_+$:
\begin{equation} 
\int_0^\infty (\sigma/i)^\mu e^{ix\sigma} d\sigma = \int_{i \R_+} (\sigma/i)^\mu e^{ix\sigma} d\sigma 
= i \int_0^\infty t^\mu e^{-xt } dt 
= i x^{-1-\mu} \Gamma(1+\mu) 
\end{equation}  
If $ x < 0$ we move the contour to $-i \R_+$ and obtain for $\Gi_\gamma^s(x)$
\begin{equation} 
 \sum_{j=0}^\infty   \frac{(-1/3)^j\Gamma(1+\gamma+3j)}{j!\pi} |x|^{-1-\gamma-3j} \left\{
    \begin{array}{ll} -\cos(\pi(\gamma+3j) & x<0 \\ 1 & x>0 \end{array}
  \right.  + O(|x|^{-\infty} ).
 \end{equation}

The oscillatory part (for $x< 0$ ) is dealt with as above and 
we obtain the leading term 
\begin{equation}
\Gi_\gamma^o \sim  
 -   \frac1{\sqrt{\pi}}  |x|^{-\frac14+\gamma/2}    \sin( \frac23 |x|^{3/2} - \frac{\pi}4 - \frac{\gamma \pi }2 )  
 \end{equation} 
as $ x \to -\infty$, again with the same type of asymptotic series,  and 
it is $O(|x|^{-\infty})$  as $ x\to \infty$. Again it can be differentiated term by term with respect to $x$ and $\gamma$.

Finally, we set for $\gamma> -1$ 
\[
 \Hi_\gamma(x) = \frac1{\pi} \int_0^\infty \sigma^\gamma e^{-\frac13
  \sigma^3+ \sigma x } d \sigma.  
\] 
The derivative is again simple
\[ \Hi_\gamma' = \Hi_{\gamma+1} \]
and furthermore
\[ (1+\gamma) \Hi_\gamma + x\Hi_\gamma' - \Hi'''_\gamma = 0 \]
which we rewrite as 
\[ (1+\gamma) \Hi_\gamma + x\Hi_{\gamma+1} - \Hi_{\gamma+3} = 0 \]

The evaluation at $x=0$ is given by
\begin{equation}\label{Hi0}  
 \Hi_\gamma(0) =   \frac1{\pi} \int_0^\infty  \sigma^\gamma e^{-\sigma^3/3} d\sigma
=  \frac1{\pi} 3^{\frac{\gamma-2}3}   \int  \rho^{(\gamma-2)/3}    e^{-\rho} d\rho
 =  \frac1{\pi} 3^{\frac{\gamma-2}3} \Gamma( (\gamma+1)/3).  
\end{equation}

It is not hard to see that
\[
 \Hi_\gamma(x) = \sum_{j=0}^\infty\frac{\Gamma(1+\gamma+3j) }{3^j j!\pi} |x|^{-1-\gamma-3j}
+ O(|x|^{-\infty})  \]
as $ x \to -\infty$ and 
\begin{equation} \label{hiasym}  
\Hi_\gamma(x) = \left\{ \frac1{\sqrt{\pi}} x^{-\frac14 +
    \frac{\gamma}2}+ O(x^{-\frac74+ \frac{\gamma}2}) \right\}
e^{\frac23 x^{3/2}} 
\end{equation}  
as $x \to \infty$, where again the $O(|x|^{-\frac74+\frac{\gamma}2})$ terms
can be sharpened to an asymptotic series.  The functions $H_\gamma$ and all their $x$  derivatives are nonnegative.
Derivatives with respect to $x$ and $\gamma$ can be handled as above.

\subsection{Wronskian determinant}
The three functions $\Ai_\gamma$, $\Gi_\gamma$ and $\Hi_\gamma$
satisfy the same differential equation.  Here we will collect
properties of the Wronskian matrix defined by those functions.

The Wronskian determinant $W$ is independent of $x$ since there is no second derivative in the ODE  and we evaluate it
at $x=0$
\[
\begin{split}  
W = &
\det \left( \begin{matrix} 
 \Ai_{\gamma}(0) & \Gi_{\gamma}(0) & \Hi_\gamma(0) \\
-\Ai_{\gamma+1}(0) & - \Gi_{\gamma+1}(0)& \Hi_{\gamma+1}(0) \\
\Ai_{\gamma+2}(0) & \Gi_{\gamma+2}(0) & \Hi_{\gamma+2}(0) 
\end{matrix} 
\right) 
\\ = &  \pi^{-3} 3^{\gamma-1} 
\Gamma( \frac{\gamma+1}3) \Gamma( \frac{\gamma+2}3) \Gamma(\frac{\gamma+3}3) 
\det \left( \begin{matrix} 
  \sin( \frac{(\gamma-2)\pi}3 )   &   \cos( \frac{(\gamma-2)\pi}3 )   & 
     1  \\
-\sin(\frac{(\gamma-1)\pi}3)  & -\cos(\frac{(\gamma-1)\pi}3)  & 1 \\
\sin (\frac{\gamma \pi}3 )  & \cos( \frac{\gamma \pi}3 )  & 1 
\end{matrix} 
\right) 
\end{split} 
\] 
The Gaussian multiplication formula simplifies the product of the
$\Gamma$ functions
\[ \Gamma( \frac{\gamma+1}3) \Gamma( \frac{\gamma+2}3) \Gamma(\frac{\gamma+3}3)
=  2\pi  3^{-1/2- \gamma} \Gamma(1+\gamma).
\]

The remaining determinant can be expanded and simplified via addition theorems 
and evaluates to $3\sqrt{3}/2$.

Altogether, we arrive at 
\begin{equation} \label{wronski} 
 W =  \frac{\Gamma(\gamma+1)}{\pi^2}.  
\end{equation}  
In particular, the functions $\Ai_{\gamma}$, $\Gi_\gamma$ and $\Hi_\gamma$ 
are a fundamental system for the differential equation
\begin{equation}  (1+\gamma) u + x u_x - u_{xxx} = 0. \end{equation}

\subsection{Subdeterminants} 
Let 
\[
 f(x) = \Ai_{\gamma} \Gi_{\gamma}' - \Ai_\gamma' \Gi_\gamma 
= -\Ai_\gamma \Gi_{\gamma+1} + \Ai_{\gamma+1} \Gi_\gamma =: [\Ai_\gamma, \Gi_\gamma] 
\]
 and calculate
\[
\begin{split} 
  x f' - f''' = & x \Big(\Ai_{\gamma} \Gi_{\gamma+2} - \Ai_{\gamma+2}
  \Gi_\gamma \Big)\\ &
  - \Ai_{\gamma} \Gi_{\gamma+4} - 2 \Ai_{\gamma+1} \Gi_{\gamma+3} 
+ 2 \Ai_{\gamma+3} \Gi_{\gamma+1} + \Ai_{\gamma+4} \Gi_\gamma \\
  = & \Ai_\gamma ( x \Gi_{\gamma+2} - \Gi_{\gamma+4}) - \Gi_\gamma (x
  \Ai_{\gamma+2} - \Ai_{\gamma+4} ) \\ & +2 \Ai_{\gamma+1} ( x
  \Gi_{\gamma+1} - \Ai_{\gamma+3}) - 2 \Gi_{\gamma+1}( x
  \Ai_{\gamma+1} - \Ai_{\gamma+3} )
  \\
  = & (2+\gamma) \left(  \Ai_{\gamma} \Gi_{\gamma+1} -
 \Ai_{\gamma+1} \Gi_\gamma \right)  + 2(1+\gamma) 
\left( \Ai_{\gamma+1} \Gi_{\gamma} - \Ai_{\gamma} \Gi_{\gamma+1}\right)  \\
  = &  -\gamma \left( \Ai_{\gamma} \Gi_{\gamma+1} - 
\Ai_{\gamma+1} \Gi_\gamma \right) \\
  = & -(1+\tilde \gamma) f
\end{split} 
\]
with 
\[ \tilde \gamma = -1-\gamma. \]
Hence 
\[ f = c_1 \Ai_{\tilde \gamma} + c_2 \Gi_{\tilde \gamma} + c_3 \Hi_{\tilde \gamma}. \]
The function $f$ heritates the faster than polynomial decay for $ x>> 1$  from 
$\Ai_{\gamma}$. Thus $c_2 = c_3= 0$. The leading term to the right is 
\[ 
 \frac1{2\sqrt{\pi}} x^{\frac14+\frac\gamma2}  
  \frac{\Gamma(1+\gamma)}{\pi} x^{-1-\gamma} e^{-\frac23 x^{\frac32}} 
\]
We compare this with the asymptotic of $\Ai_{-1-\gamma}$ which gives 
\begin{equation}\label{AiGi} 
 [\Ai_\gamma, \Gi_\gamma] = 
\frac{\Gamma(1+\gamma)}{\pi} \Ai_{-1-\gamma} 
\end{equation}

Similarly 
\[ [\Ai_\gamma, \Hi_{\gamma}] = c_1 \Ai_{\tilde \gamma} 
+ c_2 \Gi_{\tilde \gamma} + c_3 \Hi_{\tilde \gamma} 
\] 
The leading term for $ x>> 1$  is 
\[   \frac1{\pi }   x^{\gamma}   \]
and hence 
\[ c_3=0, \qquad  c_2= \frac1{\Gamma(-\gamma)}. \] 
We recall that 
\[ \Gamma(1-s) \Gamma(s) = \frac\pi{\sin (s\pi)}. \]
to rewrite 
\[ c_2 = -\frac{\Gamma(1+\gamma)}\pi \sin(\gamma \pi) \]

The leading term for  $x << -1  $  is 
\[ \frac{\Gamma(1+\gamma)}{2 \pi^{3/2}} |x|^{-\frac34-\frac{\gamma}2} \cos\left( 
\frac23 |x|^{3/2} -\frac\pi4 -\frac{(\gamma+1)\pi}2 \right)
 \]
where 
\[
\begin{split}
  \cos\left( \frac23 |x|^{3/2} -\frac\pi4 -\frac{(\gamma+1)\pi}2
  \right) = & \cos\left( \frac23 |x|^{3/2}-\frac\pi4 -\frac{\tilde
      \gamma \pi}2 \right) \cos ((\gamma+1)\pi) 
\\ & + \sin\left(
    \frac23 |x|^{3/2}-\frac\pi4 -\frac{\tilde \gamma \pi}2 \right)
  \sin ((\gamma+1)\pi)
\end{split} 
\]

hence 
\[  c_1 = -\frac{\Gamma(1+\gamma)}{\pi} \cos( \gamma \pi ) \]
\begin{equation} \label{AiHi} 
[\Ai_{\gamma}, \Hi_\gamma] = - \frac{\Gamma(1+\gamma)}{\pi} \Big(\cos (\gamma \pi )
  \Ai_{\tilde \gamma} + \sin (\gamma \pi) \Gi_{\tilde \gamma} \Big).
\end{equation}

Finally 
 \[ [\Gi_\gamma ,\Hi_{\gamma}] = c_1 \Ai_{\tilde \gamma} 
+ c_2 \Gi_{\tilde \gamma} + c_3 \Hi_{\tilde \gamma}. 
\] 
The leading term for $ x>> 1$ is  
\[
 \frac{\Gamma(1+\gamma)} {\pi^{3/2}} |x|^{-\frac34-\frac{\gamma}2}
e^{\frac23 x^{\frac32} } \] hence
\begin{equation}  c_3 =  \frac{\Gamma(1+\gamma)}{\pi}.  \end{equation}
The leading oscillatory term for $ x << -1$  is 
\[ \frac{\Gamma(1+\gamma)}{2 \pi^{3/2}} |x|^{-\frac34-\frac{\gamma}2} 
\sin( \frac23 |x|^{3/2} -\frac\pi4 -\frac{(\gamma+1)\pi}2 ) 
\]
where 
\[\begin{split} 
 \sin( \frac23 |x|^{3/2} -\frac\pi4 -\frac{(\gamma+1)\pi}2 ) 
  = & \sin(\frac23 |x|^{3/2} -\frac\pi4- \frac{\tilde \gamma \pi}2)
  \cos ((\gamma+1) \pi) \\ & - \cos(\frac23 |x|^{3/2} -\frac\pi4-
  \frac{\tilde \gamma \pi}2) \sin ((\gamma+1) \pi)
\end{split} 
\]
hence 
\[ c_1 =  \frac{\Gamma(1+\gamma)}\pi \sin (\gamma\pi), \qquad c_2 =
-\frac{\Gamma(1+\gamma)}\pi \cos (\gamma \pi) \] 
and
\begin{equation} \label{GiHi} 
 [\Gi_\gamma,  \Hi_\gamma] = 
   \frac{\Gamma(1+\gamma)}{\pi} \Big(  \sin (\gamma \pi) \Ai_{\tilde \gamma}
 - \cos (\gamma\pi) \Gi_{\tilde \gamma} + 
\Hi_{\tilde \gamma} \Big) 
\end{equation}
We collect all the formulas in a proposition. 

\begin{prop} \label{minors}
 The following identities hold
\begin{equation} 
\begin{split} 
 [\Ai_\gamma,\Gi_\gamma] = & \frac{\Gamma(1+\gamma)}{\pi} \Ai_{-1-\gamma} \\
[\Ai_\gamma, \Hi_\gamma] = & \frac{\Gamma(1+\gamma)}{\pi}\left(-\cos(\pi \gamma) \Ai_{-1-\gamma} - \sin(\pi \gamma)  \Gi_{-1-\gamma}\right)
\\ 
[\Gi_\gamma,\Hi_\gamma] = & \frac{\Gamma(1+\gamma)}{\pi}\left( \sin(\pi \gamma) \Ai_{-1-\gamma} - \cos(\pi\gamma) \Gi_{-1-\gamma} + \Hi_{-1-\gamma}\right).   
\end{split}
 \end{equation}

\end{prop}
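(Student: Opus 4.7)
The plan is to prove each of the three identities by a uniform strategy: show that the bracket $[f_1,f_2] := f_1 f_2' - f_1' f_2$ of two solutions of the third order ODE $(1+\gamma)w + xw_x - w_{xxx}=0$ is itself a solution of the analogous ODE with parameter $\tilde\gamma := -1-\gamma$, and then read off the unique expansion in the fundamental system $\{\Ai_{\tilde\gamma}, \Gi_{\tilde\gamma}, \Hi_{\tilde\gamma}\}$ by matching asymptotics at $\pm\infty$.

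First I would verify, by direct substitution using the three-term recurrence $(1+\gamma) F_\gamma - x F_{\gamma+1} + F_{\gamma+3} = 0$ (with the appropriate sign flip for $\Hi$), that $x[f_1,f_2]' - [f_1,f_2]''' = -(1+\tilde\gamma)[f_1,f_2]$. This computation is structurally the same for all three pairs because it uses only the common ODE. Since the Wronskian \eqref{wronski} is nonzero, the three functions $\Ai_{\tilde\gamma},\Gi_{\tilde\gamma},\Hi_{\tilde\gamma}$ span the solution space, so each bracket admits a unique decomposition $[f_1,f_2] = c_1\Ai_{\tilde\gamma} + c_2\Gi_{\tilde\gamma} + c_3\Hi_{\tilde\gamma}$ and the task reduces to identifying three scalar coefficients.

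Next I would extract the coefficients from the asymptotics established earlier in the section. At $+\infty$, $\Hi_{\tilde\gamma}$ grows like $e^{\frac23 x^{3/2}}$, $\Ai_{\tilde\gamma}$ decays like $e^{-\frac23 x^{3/2}}$, and $\Gi_{\tilde\gamma}$ has only polynomial asymptotics; at $-\infty$, $\Ai_{\tilde\gamma}$ and $\Gi_{\tilde\gamma}$ oscillate with orthogonal phases (cosine versus sine) while $\Hi_{\tilde\gamma}$ decays polynomially. For $[\Ai_\gamma,\Gi_\gamma]$ the exponential decay of $\Ai_\gamma$ at $+\infty$ forces $c_2=c_3=0$ and matching the exponential prefactor against that of $\Ai_{-1-\gamma}$ yields $c_1 = \Gamma(1+\gamma)/\pi$. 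For $[\Gi_\gamma,\Hi_\gamma]$ the exponential growth at $+\infty$ is carried entirely by $\Hi_\gamma$, pinning down $c_3 = \Gamma(1+\gamma)/\pi$, while the sine and cosine phases at $-\infty$ then provide $c_1$ and $c_2$. For $[\Ai_\gamma,\Hi_\gamma]$ the two exponential factors cancel, killing $c_3$, so $c_1$ and $c_2$ are read simultaneously from the $-\infty$ oscillation.

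The main obstacle is the trigonometric and Gamma-function bookkeeping in the $-\infty$ matching: the expansions of $\cos$ and $\sin$ of the shifted argument $\frac23|x|^{3/2} - \pi/4 - (\gamma+1)\pi/2$ must be reorganised against the canonical phases attached to $\Ai_{\tilde\gamma}$ and $\Gi_{\tilde\gamma}$, and this is where the $\cos(\pi\gamma)$ and $\sin(\pi\gamma)$ factors enter. The reflection formula $\Gamma(s)\Gamma(1-s) = \pi/\sin(\pi s)$ is applied once, in the $[\Ai,\Hi]$ case, to bring the natural matching prefactor $1/\Gamma(-\gamma)$ into the stated form $-\Gamma(1+\gamma)\sin(\pi\gamma)/\pi$. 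The proposition is then a direct collection of \eqref{AiGi}, \eqref{AiHi} and \eqref{GiHi}.
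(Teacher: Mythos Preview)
Your proposal is correct and follows essentially the same route as the paper: verify that the bracket satisfies the ODE with parameter $\tilde\gamma=-1-\gamma$, expand in the fundamental system $\{\Ai_{\tilde\gamma},\Gi_{\tilde\gamma},\Hi_{\tilde\gamma}\}$, and identify the three coefficients from the asymptotics at $\pm\infty$, invoking the reflection formula once for $[\Ai_\gamma,\Hi_\gamma]$. The only minor discrepancy is that for $[\Ai_\gamma,\Hi_\gamma]$ the paper reads off $c_2$ from the polynomial leading term $\pi^{-1}x^\gamma$ at $+\infty$ (matched against $\Gi_{\tilde\gamma}\sim \frac{\Gamma(-\gamma)}{\pi}x^\gamma$, which is where $1/\Gamma(-\gamma)$ arises) and only $c_1$ from the $-\infty$ oscillation, whereas you say both $c_1$ and $c_2$ come from $-\infty$; your later mention of $1/\Gamma(-\gamma)$ shows you in fact have the $+\infty$ matching in mind, so this is a wording slip rather than a mathematical one.
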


\section{Green's functions} 
\label{sec:fundamental} 

\subsection{The Green's function for \eqref{nonrescaled}}

 We consider the linear problem 
\begin{equation}\label{nonrescaled}   L_{\gamma} u := (1+\gamma)u + x u_x - u_{xxx} = f. \end{equation}  
The identities of Propositon \ref{minors} and \eqref{wronski} imply explicit formulas for Greens functions in terms of generalized 
Airy and Scorer's functions.  
There is a unique right inverse $T_L$ with integral kernel $K_L(x,y)$ supported on the left
of the diagonal.  It is for $x\ge y$
\begin{equation}   
\begin{split} 
\frac{K_\gamma^L (x,y)}{\pi}  = & \frac\pi{\Gamma(\gamma+1)} \Big\{   [\Ai_\gamma, \Gi_\gamma](y) \Hi_{\gamma}(x) 
\\ & + [ \Gi_\gamma, \Hi_\gamma](y) \Ai_{\gamma}(x) + [ \Hi_{\gamma}, \Ai_\gamma](y)] \Gi_\gamma(x) \Big\}
\\[3mm] = &  \Hi_{-1-\gamma} (y)\Ai_{\gamma}(x)+  \Ai_{-1-\gamma}(y) \Hi_\gamma(x) 
\\ & + \sin(\gamma \pi) \Big(\Gi_{-1-\gamma} (y) \Gi_\gamma(x)  +    \Ai_{-1-\gamma}(y) \Ai_\gamma(x)\Big)
\\ &  +\cos(\gamma \pi)\Big(  \Ai_{-1-\gamma} (y) \Gi_\gamma(x) -  \Gi_{-1-\gamma}(y) \Ai_\gamma(x) \Big).
\end{split}
\end{equation} 
      
It is easy to read off the leading terms of $K_\gamma^L$ in various asymptotic regimes.
Let  $x,y >> 1$. 
The leading term of the second line is given by the product of the $\Gi$ functions. It is 
\begin{equation}   \sin (\pi \gamma)  |x|^{-1-\gamma} |y|^{\gamma}. \end{equation} 
The third line decays fast as $x \sim y\to \infty$.

For $ x,y << 0 $ the only polynomial term without oscillations comes from the second line. It is 
\begin{equation}  -\sin \pi \gamma \cos^2( \pi\gamma)  |x|^{-1-\gamma} |y|^{\gamma},  \end{equation}

We recall that we will set $\gamma = \frac2p-1$ when we construct selfsimilar 
solutions,  and we will search solutions of finite energy, i.e. with 
$u_x \in L^2$ and $u \in L^{p+2}$. Let $X_0\subset C^1$ be the Banach space of functions such that the norm 
\begin{equation}  \Vert u \Vert_{X_0} =  \sup |(1+|x|)^{1+\gamma}  u| + |(1+|x|)^{2+\gamma} u_x|  \end{equation} 

The decay of the generalized Airy functions and of Scorer's function determine uniquely the right inverse 
which maps compactly supported functions to $X_0$.

\begin{theorem} \label{kernelorig} 
Let $-1 < \gamma < 0$. Then 
 there exists a unique right inverse $T_\gamma^a: C_{0}(\mathbb{R}) \to X  $ with the integral kernel  
\begin{equation}\label{kernel}   \begin{split} K_\gamma(x,y) = &  \pi \Big( \Hi_{-1-\gamma}(y) \Ai_\gamma(x) \chi_{y<x}- \Ai_{-1-\gamma}  (y) \Hi_\gamma (x)\chi_{x<y}    \Big)  \\
 & + \pi \sin (\gamma \pi)\Big(  \Gi_{-1-\gamma}(y) \Gi_{\gamma}(x)+\Ai_{-1-\gamma} (y) \Ai_{\gamma}(x) \Big)\chi_{x>y}  \\
 & + \pi  \cos(\gamma \pi) \Big(\Ai_{-1-\gamma} (y) \Gi_\gamma(x) - \Gi_{-1-\gamma}(y) \Ai_\gamma(x)\Big) \chi_{x>y}
       \end{split}
\end{equation} 
Moreover $T_\gamma$ maps $C_0$ to $X_0$. 
\end{theorem}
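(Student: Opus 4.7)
The plan is to prove Theorem \ref{kernelorig} in three stages: (i) verify that $K_\gamma$ is a Green's function for $L_\gamma$, (ii) check the mapping property $T_\gamma : C_0 \to X_0$, and (iii) establish uniqueness. For (i), I would compare $K_\gamma$ with the variation-of-parameters Green's function $K_\gamma^L$ constructed just above the theorem from Proposition \ref{minors} and \eqref{wronski}. A direct inspection of the two formulas shows
\[
K_\gamma(x,y) - K_\gamma^L(x,y) = -\pi\,\Ai_{-1-\gamma}(y)\,\Hi_\gamma(x) \qquad \text{for all } x,y \in \R,
\]
since the $\Hi_\gamma(x)$-term omitted from $K_\gamma$ on the branch $\{y<x\}$ is exactly supplied by the branch $\{x<y\}$. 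At each fixed $y$ this difference is a smooth homogeneous solution of $L_\gamma$ in $x$, so $K_\gamma$ inherits the identity $L_{\gamma,x}K_\gamma(\cdot, y) = \delta_y$ from $K_\gamma^L$; in particular $L_\gamma T_\gamma f = f$ for test $f$.

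For (ii), I would first take $f \in C_0$ of compact support. As $x \to +\infty$, only the branch $\{y<x\}$ meets the support of $f$; the $\Ai_\gamma(x)$-pieces decay super-exponentially by \eqref{airyasym}, and the decomposition $\Gi_\gamma(x) = \Gi_\gamma^s(x) + \Gi_\gamma^o(x)$ with $\Gi_\gamma^s(x) \sim \pi^{-1}\Gamma(1+\gamma)\,x^{-1-\gamma}$ and $\Gi_\gamma^o = O(x^{-\infty})$ yields $|T_\gamma f(x)| = O(x^{-1-\gamma})$; a further application of $\Gi_\gamma' = -\Gi_{\gamma+1}$ gives the derivative bound $O(x^{-2-\gamma})$. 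As $x \to -\infty$ only the branch $\{x<y\}$ contributes, so
\[
T_\gamma f(x) = -\pi\,\Hi_\gamma(x) \int \Ai_{-1-\gamma}(y)\,f(y)\,dy,
\]
and the asymptotics $\Hi_\gamma(x) \sim \pi^{-1}\Gamma(1+\gamma)|x|^{-1-\gamma}$ together with $\Hi_\gamma'(x) = \Hi_{\gamma+1}(x) \sim \pi^{-1}\Gamma(2+\gamma)|x|^{-2-\gamma}$ close the $X_0$-estimate. For non-compactly-supported $f \in C_0$ the same bounds follow from the absolute integrability of the kernel, which is read off from the same polynomial/oscillatory expansions.

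For (iii), any two right inverses $C_0 \to X_0$ differ by an operator whose range lies in $X_0 \cap \ker L_\gamma$, so uniqueness reduces to showing this intersection is trivial. Writing a homogeneous solution as $u = c_1\Ai_\gamma + c_2\Gi_\gamma + c_3\Hi_\gamma$, the super-exponential growth of $\Hi_\gamma$ at $+\infty$ in \eqref{hiasym} forces $c_3 = 0$. Then $u' = -c_1\Ai_{\gamma+1} - c_2\Gi_{\gamma+1}$ has oscillatory amplitude $\sim |x|^{1/4+\gamma/2}$ as $x \to -\infty$, which for $\gamma \in (-1,0)$ cannot satisfy the $X_0$-bound $|u'| \le C|x|^{-2-\gamma}$ unless the oscillatory combination vanishes identically; linear independence of $\cos$ and $\sin$ in the phase $\frac{2}{3}|x|^{3/2}$ then gives $c_1 = c_2 = 0$. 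The principal obstacle in the proof is exactly this last step: the slowest-decaying homogeneous solutions at $-\infty$ are oscillatory, and pointwise control on $u$ alone fails to exclude them once $\gamma > -\frac12$; the inclusion of $u_x$ in the $X_0$-norm amplifies each oscillation by a factor $|x|^{1/2}$ and is what makes uniqueness work throughout the full range $\gamma \in (-1,0)$.
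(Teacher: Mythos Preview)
Your proposal is correct and matches the paper's approach: the paper does not give an explicit proof of Theorem~\ref{kernelorig} but simply states that ``the decay of the generalized Airy functions and of Scorer's function determine uniquely the right inverse which maps compactly supported functions to $X_0$,'' and you have spelled out precisely this argument---subtracting the homogeneous solution $\pi\,\Ai_{-1-\gamma}(y)\Hi_\gamma(x)$ from $K_\gamma^L$, reading off the $X_0$ membership from the asymptotics of Section~\ref{sec:scorer}, and killing the kernel of $L_\gamma$ inside $X_0$ via the oscillatory amplitudes at $-\infty$.

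One small slip: in your closing remark the inequality is reversed. The oscillatory amplitude of $c_1\Ai_\gamma+c_2\Gi_\gamma$ at $-\infty$ is $|x|^{-1/4+\gamma/2}$, and this is $O(|x|^{-1-\gamma})$ precisely when $\gamma\le -\tfrac12$; hence pointwise control on $u$ alone fails to exclude the oscillatory solutions when $\gamma\le -\tfrac12$, not when $\gamma>-\tfrac12$. Your actual uniqueness argument via $u_x$ is unaffected, since $|x|^{1/4+\gamma/2}$ is never $O(|x|^{-2-\gamma})$ for $\gamma\in(-1,0)$.
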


\subsection{The change of coordinates}

We will use the Green's function for the transformed problem. 
The equations 
\[ (1+\gamma)  v + y v_y - v_{yyy} = f \]
and 
\[ a( (1+\gamma)  u + xu_x) - u_{xxx} + u_x =  g \]
are equivalent via
\begin{equation}  
    x =     a^{-1/3} y-a^{-1}, \quad v(y) =  a u(a^{-1/3}y-a^{-1})   \quad        f(y) =     g(x), \qquad       
\end{equation} 
Then,  
\[ \begin{split} 
    u(x) &=  a^{-1}  v( a^{1/3} (x+ a^{-1}))\\
 &=  a^{-1}  \int K_\gamma(a^{1/3} (x+ a^{-1}), z)f(z) dz \\
&=  a^{-1} \int K_\gamma(a^{1/3} (x+ a^{-1}), z)g(a^{-1/3}z-a^{-1}) dz\\
 &=  a^{-2/3} \int K_\gamma(a^{1/3} (x+ a^{-1}), a^{1/3} (y + a^{-1} )) g(y)\, dy
\end{split} 
\]
Thus  
\[ u(x) = \int \tilde K^{a}_\gamma(x,y) g(y) dy \]
where
\[
 \tilde K^a(x,y) =    a^{-2/3} K_\gamma ( a^{1/3}(x + a^{-1}), a^{1/3} (y +a^{-1}))
 \]
 We apply it to $g = \partial_x F$, where one  integration by parts yields
\begin{equation}   u(x) = -\int \partial_y \tilde K^{a}(x,y) F (y)  dy = \int K^a(x,y) F (y)  dy =: T^a_\gamma F\end{equation} 
with a new kernel
\begin{equation}  \begin{split} \frac{a^{1/3}}{\pi} K^a(x,y) = &  -\Ai_{-\gamma} (a^{1/3}(y+ a^{-1})) \Hi_\gamma(a^{1/3}(x+ a^{-1}) \chi_{x<y}
 \\ & \qquad  -   \Hi_{-\gamma}( a^{1/3} (y + a^{-1})) \Ai_{\gamma}( a^{1/3} (x + a^{-1})) \chi_{y<x} 
\\ & + \sin(\gamma \pi)  \Big( \Gi_{-\gamma} (a^{1/3} (y + a^{-1})) \Gi_{\gamma}(a^{-1/3} (x+ a^{-1}) )
\\ & \qquad + \Ai_{-\gamma} (a^{1/3} (y + a^{-1})) \Ai_{\gamma}(a^{1/3} (y+ a^{-1}) )\Big)\chi_{x > y}
\\  & +\cos(\gamma\pi) \Big( \Ai_{-\gamma} (a^{1/3} (y + a^{-1})) \Gi_{\gamma}(a^{-1/3} (x+ a^{-1}) )
\\ & \qquad - \Gi_{-\gamma} (a^{1/3} (y + a^{-1})) \Ai_{\gamma}(a^{1/3} (y+ a^{-1}) )\Big)\chi_{x > y.}
\end{split} \label{transkernel} 
 \end{equation} 

We arrive at the reformulation 
\begin{equation}    u(x) + T^a_\gamma  (|u|^p u - \langle u,Q_x \rangle  Q_x)  = 0 \end{equation} 
of the bifurcation problem \eqref{bifurcationsigma}.

\subsection{Dependence on $a$ and $\gamma$} 
The previous considerations show that

\begin{equation}  
\begin{split} 
\frac{a^{2/3}K_L (x,y)}{\pi}  =  &  \Hi_{-1-\gamma} (a^{-2/3}(1+ay))\Ai_{\gamma}(a^{-2/3}(1+ax))\\ &\  -  \Ai_{-1-\gamma}(a^{-2/3}(1+ay)) \Hi_\gamma(a^{-2/3}(1+ax)) 
\\ & + \sin(\gamma \pi) \Big(\Gi_{-1-\gamma} (a^{-2/3}(1+ay)) \Gi_\gamma(a^{-2/3}(1+ax))  \\ & \ +    \Ai_{-1-\gamma}(a^{-2/3}(1+ay)) \Ai_\gamma(a^{-2/3}(1+ax))\Big)
\\ &  +\cos(\gamma \pi)\Big(  \Ai_{-1-\gamma} (a^{-2/3}(1+ay)) \Gi_\gamma(a^{-2/3}(1+ax)) 
\\ & \ -  \Gi_{-1-\gamma}(a^{-2/3}(1+ay)) \Ai_\gamma(a^{-2/3}(1+ax)) \Big).
\end{split}
\end{equation}
is the forward Green's function. Given $y$ it is a solution to the homogeneous 
differential equation with initial condition 
\[  u(y) = u'(y) = 0, \qquad u''(0) = 1 \]
It depends analytically on $x$, $y$, $ a \in \mathbb{R}$ and $\gamma$ away from the diagonal $x=y$.
We claim that  $K^a$ is smooth with respect to $a$, $ \gamma$, $x$ and $y$. To see this we have to show that 
\[ a^{-1/3} \Ai_{\gamma}(a^{-2/3}(1+ax) \Hi_{-\gamma}(a^{-2/3}(1+ay) \]
is smooth  in $a$ and $\gamma$. It suffices to consider this at $x=y=0$, since 
solutions to analytic ODEs are analytic. We claim that 
 \[ a^{-1/3} \Ai_{\gamma}(a^{-2/3}) \Hi_{-\gamma}(a^{-2/3}) \]
is smooth  with respect to $a\in \R$. Analyticity with respect to $\gamma$ follows from 
analyticity of $\Ai_\gamma$ and $\Hi_{-\gamma}$ for fixed $a$. Smoothness in $a$ 
and even analyticity is obvious for $a \ne 0$. At $a=0$ smoothness follows from the 
asymptotics of $\Ai_\gamma$ and $\Hi_{-\gamma}$ in \eqref{airyasym} and \eqref{hiasym}.

The following Lemma  quantifies the dependence on $a$ in a crucial region. 
It is an immediate consequence of the asymptotics of the Airy and Scorers functions.
\begin{lemma}\label{dependona}  The following estimate 
\[ 
\begin{split} 
  \left| K^a(x,y) - \left( e^{-\frac12 |x-y|} + a \chi_{x>y} (1+ax)^{-1-\gamma} (1+ay)^{-1+\gamma}   \right)\right| &\\ & \hspace{-5cm}    \le c_\delta  \left(  a^2+ |a| 
e^{-\frac12    |x-y|} \right) 
\end{split} 
 \]
holds for $|x|,|y| \le a^{-1/2}$ . 
\end{lemma}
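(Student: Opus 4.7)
The plan is to insert the large-argument asymptotic expansions for $\Ai_\gamma$, $\Gi_\gamma$, $\Hi_\gamma$ established in Section~\ref{sec:scorer} into the explicit formula \eqref{transkernel} and to track the leading-order contributions in $a$. In the region $|x|,|y|\le a^{-1/2}$, both arguments $X = a^{-2/3}(1+ax)$ and $Y = a^{-2/3}(1+ay)$ are large and positive of size $a^{-2/3}$, while $1+ax, 1+ay \in [1-a^{1/2}, 1+a^{1/2}]$, so the $+\infty$-asymptotics apply uniformly.

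First I would extract the exponential contribution from the product $\Hi_{-\gamma}(Y)\Ai_\gamma(X)$ in the case $y<x$ and from its mirror $\Ai_{-\gamma}(Y)\Hi_\gamma(X)$ when $x<y$. To leading order each product equals
\[
\frac{1}{2\pi}(XY)^{-1/4}(X/Y)^{\pm\gamma/2}\, e^{\pm\frac{2}{3}(Y^{3/2}-X^{3/2})}.
\]
Taylor expanding $(1+b)^{3/2} = 1 + \tfrac32 b + \tfrac38 b^2 + O(b^3)$ at $b = ay, ax$ yields
\[
\tfrac23(Y^{3/2}-X^{3/2}) = (y-x) + \tfrac{a}{4}(y^2-x^2) + O\bigl(a^2(|x|^3+|y|^3)\bigr),
\]
while the algebraic prefactor expands as $a^{1/3}(1 + O(a(|x|+|y|)))$. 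Combining with the overall $\pi a^{-1/3}$ normalization in \eqref{transkernel} reproduces the exponential summand in the claimed approximation.

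The polynomial correction $a\,\chi_{x>y}(1+ax)^{-1-\gamma}(1+ay)^{-1+\gamma}$ comes from the single term $\sin(\gamma\pi)\Gi_{-\gamma}(Y)\Gi_\gamma(X)$, via the leading polynomial asymptotic $\Gi_\mu(X) \sim \Gamma(1+\mu)X^{-1-\mu}/\pi$, the scaling $X^{-1-\gamma}Y^{-1+\gamma} = a^{4/3}(1+ax)^{-1-\gamma}(1+ay)^{-1+\gamma}$, and the Euler reflection identity $\sin(\gamma\pi)\Gamma(1-\gamma)\Gamma(1+\gamma) = \pi\gamma$. The remaining $\cos(\gamma\pi)$ cross terms in \eqref{transkernel} pair a polynomially decaying $\Gi$ with an exponentially small $\Ai \sim e^{-c/a}$, and are absorbed into the $a^2$ part of the error.

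The remainder estimate is then assembled from the subleading terms in the asymptotic expansions: for $\Ai_\gamma, \Hi_\gamma$ these are a factor $X^{-3/2}\sim a$ smaller than leading (producing the $|a|e^{-|x-y|/2}$ contribution), and for $\Gi^s_\gamma$ a factor $X^{-3}\sim a^2$ smaller (matching the $a^2$ term), together with the Taylor expansions of the algebraic prefactors $(1+ax)^\alpha$. The main technical obstacle will be handling the quadratic correction $\tfrac{a}{4}(y^2-x^2)$ in the exponent, which in the stated region is only bounded rather than small. I would exploit the factorization $y^2-x^2 = (y-x)(y+x)$ together with $|y+x|\le 2a^{-1/2}$ to bound this contribution by $\tfrac12 a^{1/2}|y-x|$; the inequality $|y-x|e^{-|x-y|} \le C e^{-|x-y|/2}$ then allows the discrepancy between the natural exponent $-|x-y|$ produced by the calculation and the looser $-|x-y|/2$ appearing in the statement to be absorbed into the $|a|e^{-|x-y|/2}$ error, and the cubic exponent correction $O(a^2(|x|^3+|y|^3)) = O(a^{1/2})$ is treated in the same fashion.
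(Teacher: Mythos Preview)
Your approach is essentially the paper's own: insert the large-argument asymptotics of $\Ai_\gamma,\Gi_\gamma,\Hi_\gamma$ into \eqref{transkernel}, extract the exponential piece from the $\Ai\cdot\Hi$ product and the polynomial piece from $\sin(\gamma\pi)\,\Gi_{-\gamma}\Gi_\gamma$, and control the quadratic correction in the exponent --- you via the factorization $y^2-x^2=(y-x)(y+x)$ with $|y+x|\le 2a^{-1/2}$, the paper via a second-order Taylor expansion around $ax$ that produces $\tfrac{a}{4}(1+ax)^{-1/2}(y-x)^2\le ca|x-y|^2$, both of which feed into $|x-y|^k e^{-|x-y|}\le Ce^{-|x-y|/2}$.

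One caveat worth recording: carried out carefully, your own computation yields $\tfrac12 e^{-|x-y|}$ for the exponential summand and, via the reflection identity you quote, a coefficient $a\gamma$ (not $a$) in front of the polynomial piece. These differ from the constants displayed in the lemma; they are typos in the statement --- the paper's proof itself works with $\tfrac12 e^{-|x-y|}$, and the precise constant on the $O(a)$ polynomial term is immaterial for the only downstream use (Lemma~\ref{starting}). So do not claim to ``reproduce'' the displayed constants without flagging this.
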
 
\begin{proof} First we observe 
 \[ \begin{split} \left| a^{-1/3} \Gi_\gamma(a^{-2/3}(1+ax)) \Gi_{-\gamma}(a^{-2/3}(1+ay)
- a(1+ax)^{-1-\gamma} (1+ay)^{-1+\gamma}  \right|  & \\ 
& \hspace{-6cm}  \le a^3 ((1+ax)^{-3} 
\end{split} 
\]
The terms $\Ai_{-\gamma} \Gi_\gamma$, $\Gi_{-\gamma} \Ai_\gamma$ and 
$\Ai_{-\gamma} \Ai_\gamma$ are much smaller. To be precise we assume $x \le y$ and
estimate 
\[
\begin{split} 
 \pi^{-1} a^{-1/3}\left| \Ai_{-\gamma}(a^{-2/3}(1+ay) ) \Hi_\gamma(a^{-2/3}(1+ax)e^{y-x} -\frac12 \right|  
 \hspace{- 8cm} & 
\\ = &  (1+ax)^{-\frac14+\frac{\gamma}2} (1+ay)^{-\frac14-\frac{\gamma}2} 
  \left|\frac12  e^{\frac{2}{3a}  ( (1+ax)^{\frac32} -(1+ay)^{\frac32})  +(y-x)}   -\frac12 \right| 
\\ \le & c \left| e^{ \frac{a}4  (1+ax)^{-\frac12} (y-x)^2} -1 \right| 
\\ \le & c a |x-y|^2 
\end{split} 
\]
The case $x \le y$ is similar. 
  \end{proof} 

\section{The implicit function theorem} 
\label{sec:weighted} 
\subsection{The operator $T^a_\gamma$ in weighted function spaces}

We rewrite the problem as a fixed point problem for the identity plus a compact 
map.  Then the Fredholm alternative will allow us to apply the implicit function 
theorem. Things however are not as simple as they may appear from this 
description: The derivatives with respect to $a$ and $\mu$ are not bounded 
in this functional analytic setting. They have to be handled by different arguments 
in the next section.

The following result is the basic linear estimate for the operator $T^a_\gamma$.
It is a weighted estimate with a weight tailored for the problem at hand. This is necessarily involved. 

The asymptotics on the left is essentially given by 
\[ \Hi_{\gamma}(a^{-2/3}(1+ax)) / \Hi_\gamma(a^{-2/3})\]
and on the right by 
\[ \Gi_{\gamma}(a^{-2/3}(1+ax)) / \Gi_\gamma(a^{-2/3}). \]
This decay is to a certain extent captured by the weights below.

\begin{prop} \label{keyest} 
Let $k \ge 0$.  
There exists $c>0$ such that the following is true.
Let   $0< a \le 1/10 $,    $|\gamma+\frac12| <  \frac18$ and
\begin{equation}\label{wa}     w^a(x):= \left\{ \begin{array}{cl}     e^{-\frac{1 }{3 a}} (1+a^{-2/3}|1+ax|)^{-\frac38}  & \text{ if } x \le -a^{-1} \\
                                            \exp(\frac{1}{3a} [(1+ax)^{3/2}-1])  & \text{ if }  -a^{-1} \le x \le 0 \\
                         (1+x)^k (1+ax)^{-1-\gamma-k} & \text{ if } x \ge 0 
                      \end{array} \right. 
\end{equation} 
and
 \begin{equation} \label{wai} 
  w_i^a(x) =  \left\{ \begin{array}{cl} 
 e^{-\frac{1}{2 a}}(1+  a^{-2/3}|1+ ax|)^{-\frac32}  & \text{ if } x \le -a^{-1} \\
 \exp(\frac{1}{2a} [(1+ax)^{\frac32}-1]))  & \text{ if }  -a^{-1} \le x \le 0 \\
                      (1+x)^k      (1+ax)^{-1-\gamma-k}  & \text{ if } x \ge 0 
\end{array} \right. 
  \end{equation} 
and 
\begin{equation} 
 w^0 = \left\{ \begin{array}{rl} e^{- |x|/2} & \text{ if } x < 0 \\ 
   1 & \text{ otherwise }  \end{array} \right. \qquad  w^0_i=\left\{ \begin{array}{rl} e^{- 3|x|/4} & \text{ if } x < 0 \\ 
   1 & \text{ otherwise }  \end{array} \right.
\end{equation} 
Then 
\begin{equation}  \sup_{x,0\le a \le \frac12, |\frac12+\gamma|\le \frac18 } |T^a_\gamma f(x)|/w^a(x) \le c \sup_x  |f(x)|/w_i^a(x). \end{equation} 
 \end{prop}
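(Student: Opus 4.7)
The estimate is a weighted $L^\infty\to L^\infty$ bound for $T^a_\gamma$, and it reduces to showing the uniform pointwise bound
\[
\int |K^a(x,y)|\, w_i^a(y)\, dy \;\le\; c\, w^a(x).
\]
The plan is to partition the $(x,y)$-plane according to the three pieces of the weights, namely $\{\le -a^{-1}\}$ (oscillatory), $[-a^{-1},0]$ (exponentially growing), and $[0,\infty)$ (polynomial), and treat the resulting nine sub-cases separately. In the inner region $|x|,|y|\le a^{-1/2}$ I would invoke Lemma \ref{dependona} to replace $K^a(x,y)$ by the explicit expression $e^{-|x-y|/2} + a\chi_{x>y}(1+ax)^{-1-\gamma}(1+ay)^{-1+\gamma}$ modulo an $O(a^2 + |a|e^{-|x-y|/2})$ error; the two weights essentially coincide on this set, so the bound becomes an elementary integration.

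Outside the inner region I would substitute the asymptotic expansions of $\Ai_\gamma$, $\Gi_\gamma$, $\Hi_\gamma$ from Section \ref{sec:scorer} into the explicit kernel \eqref{transkernel}. The structural point is that each leading term factors as a function of $y$ (of $\Ai_{-\gamma}$, $\Gi_{-\gamma}$ or $\Hi_{-\gamma}$ type) times a function of $x$ that essentially reproduces $w^a(x)$. For instance, the dominant pairing $\Hi_{-\gamma}(a^{-2/3}(1+ay))\Ai_\gamma(a^{-2/3}(1+ax))\chi_{y<x}$ is controlled by a Laplace-type estimate: integrating $e^{\frac{2}{3a}(1+ay)^{3/2}} w_i^a(y)$ over $y\in[-a^{-1},x]$ is concentrated near the upper endpoint $y=x$ and yields a polynomial multiple of $e^{\frac{2}{3a}(1+ax)^{3/2}}$, which is exactly cancelled by the exponential decay $\Ai_\gamma(a^{-2/3}(1+ax))\sim e^{-\frac{2}{3a}(1+ax)^{3/2}}$, reproducing $w^a(x)$ up to a constant.

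The main obstacle is the exponential and polynomial bookkeeping. The gap $\tfrac{1}{2a}-\tfrac{1}{3a}=\tfrac{1}{6a}$ between the exponential rates of $w_i^a$ and $w^a$ (both in the middle zone and in the prefactors $e^{-1/(2a)}$ versus $e^{-1/(3a)}$ on the far left) is precisely what absorbs the cross-terms in \eqref{transkernel} involving $\Gi_{-\gamma}(y)\Gi_\gamma(x)$ and $\Ai_{-\gamma}(y)\Ai_\gamma(x)$ once their oscillatory expansions are inserted; these cross-terms never achieve the full growth rate that $w_i^a$ tolerates on the input side. On the far left, the oscillatory polynomial rate $|a^{-2/3}(1+ax)|^{-1/4+\gamma/2}$ of $\Ai_\gamma$ and $\Gi_\gamma^o$ is strictly faster than the exponent $-3/8$ of $w^a$, because $|\gamma+\tfrac12|<\tfrac18$ forces $-1/4+\gamma/2 < -7/16 < -3/8$, which drives the matching in the oscillatory zone. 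Once this slack has been identified, the verification in each of the nine cases is a routine, if extensive, calculation based on the expansions of Section \ref{sec:scorer} and the minor identities of Proposition \ref{minors}.
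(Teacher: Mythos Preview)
Your proposal is correct and follows essentially the same route as the paper: reduce to the kernel bound $\int |K^a_\gamma(x,y)|\,w_i^a(y)\,dy \le c\,w^a(x)$ (this is the paper's Lemma~\ref{kernelest}) and verify it by a case split over the three weight regions in $x$ and $y$, using the asymptotics of $\Ai_\gamma$, $\Gi_\gamma$, $\Hi_\gamma$ from Section~\ref{sec:scorer}. One small deviation: the paper does not invoke Lemma~\ref{dependona} here at all---it works directly with the Airy/Scorer asymptotics in every region, reserving Lemma~\ref{dependona} for the later Lemma~\ref{starting}; your detour through the inner region $|x|,|y|\le a^{-1/2}$ is harmless but unnecessary, and the minor identities of Proposition~\ref{minors} are likewise already absorbed into the explicit kernel \eqref{transkernel} and need not be re-invoked.
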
 

 The complexity of the weight 
reflects the different asymptotic areas, and the proof consists in 
decomposing operator and domain in smaller pieces for which elementary 
estimates become possible. The proposition is an immediate consequence 
of 

\begin{lemma} \label{kernelest} 
There exists $c>0$ independent of $x$, $a$ and $\gamma$ such that  
\[  \int |K^a_\gamma(x,y)| w_i^a (y) dy \le c w^a(x) 
\]
for $x \in \mathbb{R}$, $0\le a \le  1$ and $|\frac12 +\gamma| \le \frac18$. 
\end{lemma}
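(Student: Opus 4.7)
The plan is to estimate the kernel $K^a_\gamma(x,y)$ pointwise using the asymptotic formulas of Section \ref{sec:scorer} for each of the six terms in \eqref{transkernel}, and then perform the $y$-integration against $w_i^a(y)$ in a case analysis. Partition the real line into $I_- = (-\infty,-a^{-1}]$, $I_0 = [-a^{-1},0]$ and $I_+ = [0,\infty)$, corresponding to the three regimes of the argument $a^{-2/3}(1+a\cdot)$: large negative (oscillatory), intermediate (across the turning point), and large positive. The weight $w^a$ is piecewise modeled on the dominant output of the Airy/Scorer functions, while $w_i^a$ is a slightly smaller weight chosen so that the $y$-integral converges and reproduces $w^a(x)$.

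In each of the nine $(x,y)\in I_\alpha\times I_\beta$ sub-cases I would substitute the leading asymptotics: on the positive-argument side, $\Hi_\gamma$ grows like $(a^{-2/3}(1+a\cdot))^{-\frac14+\frac\gamma2}\exp(\tfrac{2}{3a}(1+a\cdot)^{3/2})$, $\Ai_\gamma$ decays like the reciprocal exponential, and $\Gi_\gamma$ is algebraic $\sim \pi^{-1}\Gamma(1+\gamma)(a^{-2/3}(1+a\cdot))^{-1-\gamma}$; on the negative side $\Hi_{\pm\gamma}$ is algebraic and $\Ai_\gamma$, $\Gi_\gamma$ are oscillatory with envelope $(a^{-2/3}|1+a\cdot|)^{-\frac14+\frac\gamma2}$. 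For the interior region $\max(|x|,|y|)\le a^{-1/2}$, I would bypass the case analysis and instead invoke Lemma \ref{dependona}: the resulting bound $|K^a(x,y)|\le c(e^{-|x-y|/2}+a\chi_{x>y}(1+ax)^{-1-\gamma}(1+ay)^{-1+\gamma})$ combined with the observation that $w^a(x)\gtrsim e^{-|x|/2}$ on $I_0$ and $w^a(x)\gtrsim(1+x)^k(1+ax)^{-1-\gamma-k}$ on $I_+$ closes this sub-case immediately.

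The algebra of the weights works out as follows. On $I_+\times I_+$ the dominant contribution comes from $\sin(\gamma\pi)\,\Gi_{-\gamma}\cdot\Gi_\gamma$; after the substitution $u=ay$, the integral $a\int_0^{ax}(1+u/a)^k(1+u)^{-2-k}\,du$ is bounded by $c\min(ax,a^{-k})$, and multiplication by $\Gi_\gamma(a^{-2/3}(1+ax))\sim(1+ax)^{-1-\gamma}$ reproduces $w^a(x)$. On $I_0\times(I_0\cup I_+)$, the integral of $\Hi_\gamma(a^{-2/3}(1+ax))\,\Ai_{-\gamma}(a^{-2/3}(1+ay))\chi_{x<y}$ against $w_i^a(y)$ is a Laplace-type integral with saddle at $y=x$; the combined phase $\tfrac{2}{3a}(1+ax)^{3/2}-\tfrac{2}{3a}(1+ay)^{3/2}+\tfrac{1}{2a}[(1+ay)^{3/2}-1]$ simplifies at the saddle to $\tfrac{1}{3a}[(1+ax)^{3/2}-1]$, i.e. exactly the exponent of $w^a$. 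The sub-cases involving $I_-$ contribute either exponentially small terms controlled by the $e^{-1/(2a)}$ prefactor of $w_i^a$ on $I_-$, or polynomial estimates in which $\tfrac14+\tfrac{|\gamma|}{2}\le\tfrac38$ matches the $-\tfrac38$ exponent in $w^a$ on $I_-$ after multiplication of the two algebraic decays.

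The main obstacle is uniformity of the asymptotic estimates across the turning points $x=-a^{-1}$ and $x=0$ and as $a\to 0$, where the standard pointwise leading-order bounds have implicit constants that blow up. The cleanest remedy is to use uniform asymptotics of the Airy/Scorer functions expressed in the Liouville–Green variable $\zeta=\tfrac{2}{3}|\xi|^{3/2}$; a more elementary alternative is to split at the scale $|x|,|y|\sim a^{-1/2}$ and use Lemma \ref{dependona} on the inner region combined with the clean far-field asymptotics on the outer region, patching via the smoothness of $K^a$ and the weights. Uniformity in $\gamma$ over $|\gamma+\tfrac12|\le\tfrac18$ follows automatically since the $\gamma$-dependence enters only through bounded exponents, the bounded trigonometric factors $\sin(\gamma\pi)$, $\cos(\gamma\pi)$, and the finite Gamma values $\Gamma(1\pm\gamma)$.
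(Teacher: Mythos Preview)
Your proposal is correct and follows essentially the same strategy as the paper: partition into the three regions $I_-$, $I_0$, $I_+$ according to the sign and size of $a^{-2/3}(1+a\cdot)$, substitute the asymptotics of $\Ai_\gamma$, $\Gi_\gamma$, $\Hi_\gamma$ term by term in \eqref{transkernel}, and integrate in each sub-case.

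A few minor points of comparison. First, the paper does not invoke Lemma~\ref{dependona} anywhere in this proof; the interior region is handled exactly like the rest, by direct use of the asymptotic formulas (the $\Ai\cdot\Hi$ exponential cancellation is what produces the $e^{-|x-y|}$ behavior). Your shortcut via Lemma~\ref{dependona} is legitimate but buys nothing, since you still have to do the full case analysis on the complement. Second, your worry about uniformity at the turning point $x=-a^{-1}$ is resolved in the paper more cheaply than you propose: no Liouville--Green variable is needed, only the global bound $|\Ai_\gamma(s)|+|\Gi_\gamma(s)|+|\Hi_\gamma(s)|\le c(1+|s|)^{-3/8}$ for $s\le 0$, which is continuous across $s=0$ and matches the design of $w^a$ on $I_-$ exactly. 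Third, your Laplace remark for $x\in I_0$ is slightly off: the maximum of the combined exponent is at the boundary $y=x$, where its value is $\tfrac{1}{2a}[(1+ax)^{3/2}-1]$, not $\tfrac{1}{3a}[(1+ax)^{3/2}-1]$. Since $(1+ax)^{3/2}\le 1$ on $I_0$ this is \emph{more negative} than the exponent of $w^a$, which is what makes the bound go through---so the conclusion is right, but ``exactly the exponent of $w^a$'' overstates it. Finally, on $I_+\times I_+$ the $\Ai\cdot\Hi$ pieces contribute at the same order as the $\Gi\cdot\Gi$ piece (both give a bounded multiple of $w^a(x)$), so neither is strictly dominant; the paper treats them in parallel.
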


\begin{proof} 
We will restrict ourselves to $k=0$, with marginal differences for positive $k$.
We recall that 
\begin{equation}  |\Ai_\gamma(x) |+ |\Gi_\gamma(x)|+ |\Hi_\gamma(x)| \le c (1+|x|)^{-\frac38} \end{equation} 
for $ x\le  0$ and $|\gamma+\frac12| \le \frac18$.

\noindent{\bf Step 1: $x\ge 0$. }
There are contributions from the integrals over $(-\infty,-a^{-1})$, $(-a^{-1},0)$, 
$(0,x)$ and $(x,\infty)$. We deal with them in reverse order, and we begin 
 $a^{-1/3}\Ai_{-\gamma}(a^{-2/3})(1+ay)) \Hi_{\gamma}(a^{-2/3}(1+ax))$ - the kernel
for $y>x$.  
Then
\[
\begin{split} 
 a^{-1/3} &  (1+ax)^{1+\gamma} \Hi_\gamma(a^{-2/3}(1+ax))  
 \int_{x}^\infty \Ai_{-\gamma}(a^{-2/3}(1+ay)) (1+ay)^{-1-\gamma}       dy  
\\ &  \lesssim   \int_x^\infty   e^{\frac2{3a}\Big((1+ax)^{\frac32} -(1+ay)^{\frac32}\Big)}\left( \frac{1+ax}{ 1+ay}\right) ^{1+\frac\gamma2} [(1+ax)(1+ay)]^{-\frac14}  dy  
\\ & \lesssim 
 \int_x^\infty   e^{(1+ax)^{1/2}(x-y)}   \left( \frac{1+ax}{ 1+ay}\right) ^{\frac54+\frac\gamma2} (1+ax)^{-\frac12}  dy  
\\ & \lesssim  C 
\end{split} 
\]
holds uniformly in $x \ge 0$ if $|\gamma+\frac12| \le \frac18 $.
Similarly 
 \[ 
 a^{-1/3}    \Ai_\gamma(a^{-2/3}(1+ax))  
 \int_{0}^x \Hi_{-\gamma}(a^{-2/3}(1+ay)) \left( \frac{1+ax}{1+ay}\right)^{1+\gamma}       dy  \le c. 
\]
in the same range. 

Next we consider the contribution of the product of the functions $\Gi$,  using $ \Gi_{\gamma}(a^{-2/3}(1+ax)) \sim  a^{\frac23(1+\gamma)}   (1+ax)^{-1-\gamma} $ for $x >0$:
\[  \begin{split} 
a^{-1/3} &  \int_0^x \left(\frac{1+ax}{1+ay}\right)^{1+\gamma} \Gi_\gamma(a^{-2/3}(1+ax)) 
\Gi_{-\gamma}(a^{-2/3}(1+ay)) dy 
 \\ \lesssim &   \int_0^x a (1+ay)^{-2} dy 
\\ \lesssim  & 1. 
\end{split} 
\] 

 The products 
\[ a^{-1/3}\Ai_{-\gamma}(a^{-2/3}(1+ay) )\Gi_{ \gamma}(a^{-2/3}(1+ax)) \]
and 
\[
a^{-1/3} \Gi_{-\gamma}(a^{-2/3}(1+ay)) \Ai_{\gamma}(a^{-2/3}(1+ax)) \]
are much smaller.

We observe that 
\[  \Ai_\gamma(a^{-2/3}(1+ax))\lesssim \Ai_{\gamma}(a^{-2/3})  w^a(x),  \]
\[  \Gi_{\gamma}(a^{-2/3}(1+ax)\lesssim \Gi_{\gamma}(a^{-2/3})  w^a(x) \]
for $x \ge 0$ and it suffices to bound the  contribution from $y \le 0$ at $x=0$ 
to get the same bound for all nonnegative $x$.

The estimates 
\[
\int_{-\infty}^0 \Hi_{-\gamma}(a^{-2/3}(1+ay)) w^{a}_i(y)dy \lesssim  \Hi_{-\gamma}(a^{-2/3}),
 \]
\begin{equation} \label{polleft} 
  e^{-\frac{1}{2a} } \int_{-\infty}^{-a^{-1}} (1+a^{-2/3}(1+ay))^{ \frac1{16}-\frac32}
  dy \lesssim   e^{-\frac1{3a} }  
\end{equation} 
and
\[
\int_{-a^{-1}}^0 (1+ a^{-2/3}(1+ay))^{-1-\gamma}  e^{ \frac1{2a}[( 1+ a^{-2/3}(1+ay))^{\frac32}-1]}  dy 
\lesssim a^{\frac23(1+\gamma) } \sim \Gi_\gamma(a^{-2/3}) 
\]
are straight forward.  This completes the estimate for $x>0$.

\medskip 

\noindent{\bf Step 2: $x<0$.} In view of the first  substep above (with $x=0$) the contribution from $y>0$ 
is controlled by  the obvious estimate 
\[ \Hi_{\gamma}(a^{-2/3}(1+ax))\lesssim  H_\gamma(a^{-2/3}) w^a(x). \]
We consider   the contribution from $y \le 0$ to $x \in [-a^{-1}, 0]$.
There are contributions from three different intervals: $(-\infty,-a^{-1})$, $(-a^{-1},x)$, and $(x,0)$, which we consider step by step. We consider first 
 the product of $\Ai$ and $\Hi$. The desired estimate is  
\[
\begin{split} 
\sup_{-a^{-1} \le x \le 0}   a^{-1/3} \int_{-a^{-1}}^0   \frac{( 1+ a^{-2/3}(1+ay))^{\gamma-1} }{(1+a^{-2/3}(1+ax))^{\gamma+1} }  \times & \\ & \hspace{-5cm} 
   \times  e^{-\frac2{3a} |(1+ax)^{\frac32} - (1+ay)^{\frac32}| }
 e^{-\frac1{2a}[ (1+ay)^{\frac32}-1]} e^{\frac{1}{3a}[(1-ax)^{\frac32}-1]} 
dy
\end{split} 
\]
which is trivial once broken up into different cases: $ x=0$, $y \le
x$, $ -\frac12a^{-1}\le x< y$ and $-a^{-1} \le x \le -\frac1{2a}$.
The contributions from the other terms in the Greens function are much
smaller.  Finally the contribution (to $x\in [-a^{-1},0]$) from $y \le
-a^{-1}$ is controlled by \eqref{polleft}.

\noindent {\bf Step 3: The case $x< -a^{-1}$, contribution from $y\le 0$.} 
 Again we have to  consider  the integrals over $(-\infty,x)$, $(x,a^{-1})$ 
and $(a^{-1},0)$. The integral over $(a^{-1},0)$ has been evaluated above. 
The obvious estimates 
\[   |\Ai_\gamma(a^{-2/3}(1+ax))|+ |\Gi_\gamma(a^{-2/3}(1+ax))|+ \Hi_\gamma (a^{-2/3}(1+ax))   \lesssim \frac{w^a(x)}{ w^a(-a^{-1})} \]
complete that part. 

The kernel satisfies 
\[    |K^a_\gamma(x,y)| \lesssim  a^{-1/3} (1+a^{-2/3}(1+ax))^{-\frac38} (1+a^{-2/3}(1+ay))^{\frac1{16} }  \]
for $x,y \le -a^{-1}$.  Now

\[  \int_{-\infty}^{a^{-1}}  (1+a^{-2/3}(1+ay))^{-\frac23 +\frac18}   dy \lesssim     
 a^{-1/3}  \]
completes the proof .
  \end{proof} 

We reformulate the bifurcation problem as a 
fixed point problem
\begin{equation} \label{fixedpoint} 
  u(x) =  T^{a}_\gamma \left(|u|^p u - \langle u, Q_x \rangle Q_x\right) 
\end{equation}
where we search $u$ in a neighborhood of $Q$.

We introduce $v =  u/w^a$ with $w^a$ from \eqref{wa} and rewrite the problem as 
\[ F(v)= 0 \]
with 
\[ F(v) =  v -  (w^a)^{-1}  T^a_\gamma \left[  |vw^a|^p v w^a   - \langle v w^a, Q_x\rangle Q_x \right]. \] 
Proposition \ref{keyest} implies that the map is $j$ times
Frechet differentiable on the space of bounded continuous functions,
for every nonnegative integer $j \le 3$.

We turn to the study of the linearization of \eqref{fixedpoint}  at $Q$. We include the weights into the operator and consider 
\[       \tilde T(a,p,\gamma, v)  u := (w^a)^{-1} T^a\left[(p+1)|vw^a|^p w^a u 
+ \langle u, w^a Q_x \rangle Q_x \right]. \]  
Let  $C_b(\mathbb{R})$  denote the space of continuous functions equipped by the supremums norm, 
and the closed subspace of functions with limit $0$ as 
$ x\to \pm \infty$ by $C_0$.
The  space of linear operators from the normed space $X$ 
to the normed space $Y$ is denoted  by $L(X,Y)$, which we equip with the operator norm.

\begin{cor}\label{continuity}  
The map 
\[ \begin{split} \Big( [0,1] \times \Big[-\frac12-\frac18,- \frac12+\frac18\Big]\times [3,q)\times C_0(\R) \Big) \to  & L (C_b,C_b) \\
(a,\gamma,p,v) \to & \left( u \to   \tilde T(a,p,\gamma,v)  u \right)  
\end{split} 
\]
is continuous. 
\end{cor}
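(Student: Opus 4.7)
The plan is to decompose
\[ \tilde T(a,p,\gamma,v) u = (w^a)^{-1} T^a_\gamma\bigl[(p+1)|vw^a|^p w^a u\bigr] + \langle u, w^a Q_x\rangle (w^a)^{-1} T^a_\gamma Q_x, \]
into an integral operator $T^{\mathrm{int}}$ with kernel
\[ \tilde K_{a,\gamma,p,v}(x,y) = (p+1)|v(y)w^a(y)|^p w^a(y)\,\frac{K^a_\gamma(x,y)}{w^a(x)} \]
plus a rank-one operator $T^{\mathrm{rk}}$ (here $Q=Q_p$). Via the Schur bound $\|T^{\mathrm{int}}\|_{L(C_b,C_b)} \le \sup_x \int |\tilde K(x,y)|\,dy$, uniform boundedness follows from Proposition \ref{keyest} combined with the pointwise inequality $(w^a)^{p+1}/w^a_i \le C$, which holds in each of the three regimes of \eqref{wa}--\eqref{wai}. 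Continuity is then upgraded from this uniform boundedness by a three-region dissection of the kernel.

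The rank-one part is routine: $u \mapsto \langle u, w^a Q_x\rangle$ has norm $\le \|w^a Q_x\|_{L^1}$, and $(a,\gamma,p) \mapsto w^a Q_x \in L^1$ is continuous by dominated convergence, using the exponential decay of $Q_x$ to dominate the weight families on compact parameter sets. The vector $(a,\gamma,p) \mapsto (w^a)^{-1} T^a_\gamma Q_x \in C_b$ is continuous by joint smoothness of $K^a_\gamma$ in $(a,\gamma,x,y)$ away from the diagonal (Section \ref{sec:fundamental}) together with the uniform weighted bound of Proposition \ref{keyest}. The product rule then yields continuity of $T^{\mathrm{rk}}$ in operator norm.

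For $T^{\mathrm{int}}$, along any convergent sequence $(a_n,p_n,\gamma_n,v_n) \to (a_0,p_0,\gamma_0,v_0)$ I would show $\sup_x \int |\tilde K_n(x,y)-\tilde K_0(x,y)|\,dy \to 0$ by splitting into three regions. (i) For $|y|>R$: uniform convergence of $v_n$ in $C_0$ gives $|v_n(y)|^p<\varepsilon$ for $|y|>R$ and all large $n$, so Proposition \ref{keyest} bounds the contribution by $C\varepsilon$, uniformly in $x$; likewise for $\tilde K_0$. (ii) For $|y|\le R$ and $|x|>R'$: the Airy/Scorer asymptotics used inside the proof of Proposition \ref{keyest} yield a pointwise tail estimate $|K^a_\gamma(x,y)|/w^a(x) \le \varepsilon$ for bounded $y$ and $|x|>R'$, uniformly over the compact parameter range. (iii) For $|x|,|y|\le R'$: $K^a_\gamma(x,y)$ is jointly continuous in $(a,\gamma,x,y)$ (as the Green's function of a third-order ODE it has only a jump in its second $y$-derivative), and the weights are continuous on compacts; uniform continuity on a compact parameter-times-$(x,y)$ box yields $\sup_{|x|\le R'} \int_{|y|\le R}|\tilde K_n-\tilde K_0|\,dy \to 0$.

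The hardest step is (ii): producing a pointwise-in-$x$ tail bound, uniform in $(a,\gamma,p)$, for $|K^a_\gamma(x,y)|/w^a(x)$. This is essentially a repackaging of the estimates already carried out in Proposition \ref{keyest} (which delivered the integrated uniform bound), but must be read off in pointwise form from the explicit exponential/polynomial asymptotics of $\Ai_\gamma$, $\Gi_\gamma$, $\Hi_\gamma$ in each regime. Once (i)--(iii) are assembled, their sum gives $\sup_x \int |\tilde K_n - \tilde K_0|\,dy < (2C+1)\varepsilon$ for $n$ large, which is the desired operator-norm convergence.
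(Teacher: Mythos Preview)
Your overall architecture is sound and close to the paper's: reduce to compactly supported $y$ using $v\in C_0$, then control the $x$-tail, then use joint continuity on compacts. The rank-one part is fine.

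However, step (ii) contains a false claim. For bounded $y$ and $x\to +\infty$ the ratio $|K^a_\gamma(x,y)|/w^a(x)$ does \emph{not} tend to zero. From the explicit formula \eqref{transkernel}, the term
\[
\pi\,a^{-1/3}\sin(\gamma\pi)\,\Gi_{-\gamma}\bigl(a^{-2/3}(1+ay)\bigr)\,\Gi_\gamma\bigl(a^{-2/3}(1+ax)\bigr)
\]
is present for $y<x$, and since $\Gi_\gamma\bigl(a^{-2/3}(1+ax)\bigr)\sim \tfrac{\Gamma(1+\gamma)}{\pi}a^{\frac{2(1+\gamma)}{3}}(1+ax)^{-1-\gamma}$ while $w^a(x)=(1+ax)^{-1-\gamma}$ (with $k=0$) for $x>0$, the quotient tends to a nonzero constant depending on $(a,\gamma)$. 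Thus you cannot make $\tilde K_n$ and $\tilde K_0$ individually small on $\{|y|\le R,\;x>R'\}$, and your $\varepsilon$-argument as written does not close. (The left tail $x\to-\infty$ genuinely decays, as the paper notes.)

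The paper's proof addresses precisely this point: after restricting the input to $[-R,R]$, it observes that the only contribution that fails to decay as $x\to+\infty$ is the one carried by $\Gi_\gamma\bigl(a^{-2/3}(1+ax)\bigr)$, and then argues directly that $x\mapsto \Gi_\gamma\bigl(a^{-2/3}(1+ax)\bigr)/w^a(x)$ depends continuously on $(a,\gamma)$ in $C_b$ (it has a finite limit at $+\infty$ that varies continuously with the parameters, and is uniformly continuous on compacts in $x$). Your step (ii) can be repaired by splitting off this $\Gi_\gamma$-tail as a further rank-one piece (a fixed $x$-profile times a $y$-integral over $[-R,R]$) and showing its operator-norm continuity directly; the remaining kernel then \emph{does} satisfy the pointwise tail bound you claimed, and your (i)--(iii) scheme goes through for it.
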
 
\begin{proof} 
The map 
\[   (a ,p) \to   Q_x/w^a_i  \in L^1  \]
is clearly continuous as is 
\[  (v,u,a,p) \to  |vw^a|^p w^a u/w^a_i.  \]
This implies continuity with respect to $v$, uniform with respect to $a$ and $p$. 
Hence it suffices to prove the continuity for the composition with the multiplication 
by a characteristic function, 
\[ (a,\gamma,p,v) \to  \left( u \to \tilde T(a,p,\gamma,v) \chi_{[-R,R]} u \right). \]
 The proof of Proposition \ref{keyest} implies that 
\[  x \to \tilde T(a,p,\gamma,v) \chi_{[-R,R]} u(x)  \]
converges to zero as $x \to -\infty$, uniformly for bounded $v$ and $u$ and $a$ and 
$p$ as in the theorem. 
Continuity with respect to $p$ is obvious. On the right hand side the situation is slighty different: 
Since we apply the operator to a function with compact support, the only terms which does not decay as $x \to \infty$ comes from $\Gi_\gamma(a^{-2/3}(1+ax))$. 
This term is clearly continuous with respect to $a$ and $\gamma$.
Continuity with respect to $\gamma$ and $a$  follows from the continuity of the Airy and Scorer  functions,  their asymptotics and the continuity of the Green's function.  
 \end{proof}

The invertibility of the linearization in a neighborhood of the bifurcation point is contained in the next proposition.  We denote 

\[  S^a_v u  =  u-    w_a^{-1} T^a_\gamma \big[  (p+1) w_a^{p+1} v^p u - \langle u, w_a Q_x\rangle Q_x \big] \]
 
\begin{prop}\label{invertibility}  
There exists $\delta >0$ such that 
$ S^a_v  : C_{b} \to C_b$ is invertible with an inverse whose 
norm is uniformly bounded for 
\[ |\gamma+\frac12| \le \delta,0\le  a\le \delta \text{ and } \sup \frac{| v-Q|}{w^a}\le \delta. \]
\end{prop}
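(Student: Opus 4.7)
The plan is to establish invertibility of $S^a_v$ at a single reference point, namely $(a,\gamma,p) = (0,-\tfrac12,4)$ with $v$ the scaled profile corresponding to the soliton, and then propagate invertibility to the full neighborhood by using Corollary~\ref{continuity} together with openness of $\mathrm{GL}(C_b(\mathbb{R}))$ and norm-continuity of the inversion map. At the reference point the operator $T^a_\gamma$ degenerates, via Lemma~\ref{dependona}, to the Green's function of $1-\partial_{xx}$, and the fixed-point equation collapses to the classical linearization around the soliton.

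First I would verify that
\[ K^a_v\, u := w_a^{-1} T^a_\gamma\bigl[(p+1) w_a^{p+1} v^p u - \langle u, w_a Q_x\rangle Q_x\bigr] \]
is compact on $C_b(\mathbb{R})$, so that $S^a_v = I - K^a_v$ is Fredholm of index zero and invertibility reduces to injectivity. The rank-one part is trivially compact. For the multiplicative part, the coefficient $w_a^{p+1} v^p$ decays super-exponentially at $-\infty$ (through the $\exp(\tfrac{1}{3a}[(1+ax)^{3/2}-1])$ factor of $w_a$) and exponentially at $+\infty$ (since $v\sim Q$), while Proposition~\ref{keyest} and the kernel formula~\eqref{transkernel} give equicontinuity and uniform decay of the image of bounded sets, so Arzelà--Ascoli applies.

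Second I would prove injectivity at the reference point. Given $u \in C_b$ with $S^0_{v_0} u = 0$, setting $\tilde u := w^0 u$ and applying $1-\partial_{xx}$ to the fixed-point relation yields an equation of the form
\[ L \tilde u + \langle \tilde u, Q_x\rangle Q_x = 0, \]
where $L$ is the soliton linearization from~\eqref{opl}. Pair with $Q_x$: the pairing converges absolutely because $Q_x$ decays exponentially and $\tilde u$ is bounded, and the same decay justifies the integration by parts giving $\langle L\tilde u, Q_x\rangle = \langle \tilde u, LQ_x\rangle = 0$ since $LQ_x=0$. Hence $\langle \tilde u, Q_x\rangle \Vert Q_x\Vert_{L^2}^2 = 0$ and $L\tilde u = 0$. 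Because the potential of $L$ tends to $1$ at $\pm\infty$, any bounded distributional solution of $L\tilde u=0$ decays exponentially at both ends and therefore lies in $\ker_{L^2} L = \mathrm{span}\{Q_x\}$; combined with the orthogonality just derived, this forces $\tilde u = 0$, hence $u=0$.

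Finally, Corollary~\ref{continuity} gives norm continuity of $(a,\gamma,p,v) \mapsto S^a_v$ in a neighborhood of the reference point, so openness of the invertible operators and continuity of $T\mapsto T^{-1}$ deliver the uniform inverse bound claimed in the proposition. The main obstacle is the second step: one has to match weight factors carefully so that the reduced equation really has potential $(p+1)Q^p$ and correction $\langle \tilde u, Q_x\rangle Q_x$; this fixes which choice of $v_0$ in the scaled variable corresponds to the soliton in the original variable. A weight-twisted potential would still yield a trivial kernel by an analogous Schrödinger-type argument, but the cleanest reduction to the explicit spectral fact $\ker_{L^2} L = \mathrm{span}\{Q_x\}$ occurs precisely at the soliton reference profile.
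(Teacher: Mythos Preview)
Your argument is essentially the paper's own proof: reduce to $a=0$ with the soliton profile, note compactness of the perturbation so that the Fredholm alternative applies, kill the kernel via $LQ_x=0$ and $\ker L=\mathrm{span}\{Q_x\}$, and then propagate invertibility using the norm continuity of Corollary~\ref{continuity}. The one point to adjust is that you pin the reference at $p=4$, whereas the paper leaves $p$ arbitrary; since the spectral facts about $L_p$ hold for every $p>0$, your injectivity step works verbatim along the whole segment $\{a=0,\ v=Q_p/w^0:\ p\in[3,q]\}$, and compactness in $p$ then yields the uniform $\delta$ needed in the subsequent existence argument.
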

\begin{proof} 
The operator $S^a_v $    is bounded by Proposition \ref{keyest} and the norm
continuity at $v=Q/w^a$ is the content of Corollary \ref{continuity}.  It thus 
suffices to consider invertibility at $a=0$ and $v=Q/w^a$. Clearly 
\[  u \to (w^0)^{-1} T^0[(p+1) Q^p (w^0)^{p+1} u ] - \langle  u, w^0 Q_x\rangle Q_x ] \]
is compact. We recall that the integral kernel of $T^0$ is $\frac12 e^{-|x-y|}$.

By the Fredholm alternative $S^0_Q$ is invertible if the null space is
trivial. Consider
\begin{equation}   
  u = T^0_\gamma\left[ (p+1)Q^p u - \langle  u , Q_x\rangle Q_x\right].\label{homogeneous}  \end{equation} 
We claim that there is only the trivial bounded solution. 
Suppose  that $u$ satisfies the homogeneous equation \eqref{homogeneous}. 
Since the kernel decays fast also $u$ decays fast, and the same holds for the 
derivatives. Hence
\[ u - u_{xx} - (p+1) Q^pu + \langle u , Q_x \rangle Q_x = L u -
\langle u, Q_x \rangle Q_x = 0. \] 
We take the inner product with
$Q_x$. Then $\langle u, Q_x\rangle = 0 $ since $LQ_x=0$. The null
space of $L$ is spanned by $Q_x$ and hence $u = 0$. This null space is
trivial, by the Fredholm alternative $S^0_Q$ is invertible, and this
remains so in a small neighborhood of the coefficients and $Q/w^a$.
\end{proof}

We continue with an estimate which implies that $Q$ is almost a
solution to the fixed point problem.  This is important since $F$
fails to be differentiable with respect to $a$ and $\gamma$.
\begin{lemma}\label{starting} 
There exists $C>0$ such that 
\[   
\sup_{x} |(w^a)^{-1}(Q-T^{a}_\gamma Q^{p+1})|
\le C a. 
\] 
\end{lemma}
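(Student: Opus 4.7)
The starting point is the soliton identity $-Q_{xx}+Q=Q^{p+1}$, whose $x$-derivative reads $-Q_{xxx}+Q_x=(Q^{p+1})_x$. Therefore $L^a Q=(Q^{p+1})_x+a\bigl((1+\gamma)Q+xQ_x\bigr)$, while by construction $L^a(T^a_\gamma Q^{p+1})=(Q^{p+1})_x$, so the difference $\phi:=Q-T^a_\gamma Q^{p+1}$ satisfies $L^a\phi=ag$ with $g:=(1+\gamma)Q+xQ_x$ exponentially decaying. Equivalently, writing $T^0$ for the $a\to 0$ limit of $T^a_\gamma$ (whose kernel $K^0$ is the one appearing in Lemma \ref{dependona}), uniqueness of the two-sidedly decaying solution to $-u_{xxx}+u_x=(Q^{p+1})_x$ gives $T^0 Q^{p+1}=Q$, so
\[
\phi(x)\;=\;\int\bigl(K^0(x,y)-K^a(x,y)\bigr)Q^{p+1}(y)\,dy,
\]
and the factor of $a$ to be exhibited will come from Lemma \ref{dependona}.

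The plan is to split the $y$-integration at $|y|=a^{-1/2}$. The tail $|y|>a^{-1/2}$ contributes at most $Ce^{-(p+1)a^{-1/2}}$ times a polynomial-in-$a$ bound on either kernel, and is therefore smaller than $aw^a(x)$ by any power of $a$. On the main region $|y|\le a^{-1/2}$ (assuming also $|x|\le a^{-1/2}$), Lemma \ref{dependona} yields $K^a(x,y)-K^0(x,y)=a\chi_{x>y}(1+ax)^{-1-\gamma}(1+ay)^{-1+\gamma}+O(a^2+ae^{-|x-y|/2})$. Integrated against $Q^{p+1}$, the leading term gives $a(1+ax)^{-1-\gamma}\cdot O(1)$, which for $x\ge 0$ is exactly $O(aw^a(x))$; for $-a^{-1/2}\le x<0$ the indicator $\chi_{x>y}$ forces $y<x$ and the exponential decay $Q^{p+1}(y)\le Ce^{-(p+1)|x|}$ combined with the elementary inequality $3(p+1)(1-u)\ge 1-u^{3/2}$ on $u=1+ax\in[0,1]$ yields $e^{-(p+1)|x|}\le w^a(x)$, so the contribution is again $O(aw^a(x))$. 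The remainder $O(a^2+ae^{-|x-y|/2})$ is handled in the same way.

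For $|x|>a^{-1/2}$ I would bound $Q(x)$ and $T^a_\gamma Q^{p+1}(x)$ separately. The exponential decay of $Q$ outpaces the Airy-type decay of $w^a$, making $Q(x)/w^a(x)$ super-polynomially small and in particular $\le Ca$. For $T^a_\gamma Q^{p+1}(x)$, substituting the localized $Q^{p+1}$ into the explicit kernel \eqref{transkernel} and invoking the Airy/Scorer asymptotics used in the proof of Proposition \ref{keyest}, the dominant non-super-exponentially-decaying contribution is the $\sin(\gamma\pi)\Gi_{-\gamma}(a^{-2/3})\Gi_\gamma(a^{-2/3}(1+ax))$ product of size $O(a^{4/3}w^a(x))$ for $x\gg a^{-1/2}$, and the $\Ai_{-\gamma}(a^{-2/3})\Hi_\gamma(a^{-2/3}(1+ax))$ product---whose magnitude is controlled by $w^a(x)$ up to a factor of $a$ via \eqref{airyasym} and \eqref{hiasym}---for $x\ll -a^{-1/2}$.

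The principal obstacle is the case analysis across the four asymptotic regimes of $w^a$ and their interaction with the dominant product of Airy/Scorer functions in each region. Matching is especially delicate near the transition $|x|\sim a^{-1/2}$, where both Lemma \ref{dependona} and the far-field expansions are only barely applicable. Once the framework of Section \ref{sec:weighted} and the asymptotic catalogue of Section \ref{sec:scorer} are in hand, each individual match is essentially a one-line computation.
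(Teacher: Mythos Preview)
Your approach is essentially the paper's: reduce to $(T^0-T^a)Q^{p+1}$, invoke Lemma~\ref{dependona} on a near region, and treat the far region by bounding $Q$ and $T^a_\gamma Q^{p+1}$ separately. The gap is in your choice of cutoff scale. With the split at $|x|,|y|=a^{-1/2}$, the $O(a^2)$ remainder from Lemma~\ref{dependona} integrated against $Q^{p+1}$ yields a contribution of size $O(a^2)$ that carries no decay in $x$; but for $x$ near $-a^{-1/2}$ one has $w^a(x)\approx\exp\bigl(-\tfrac12 a^{-1/2}\bigr)$, so $a^2/w^a(x)$ is of order $a^2 e^{\frac12 a^{-1/2}}$, which is certainly not $O(a)$. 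Your treatment of the leading $a\chi_{x>y}(1+ax)^{-1-\gamma}(1+ay)^{-1+\gamma}$ term and of the $ae^{-|x-y|/2}$ term is correct, since both inherit exponential decay in $|x|$ from $Q^{p+1}$ or from the kernel; the bare $a^2$ term does not, and your sentence ``handled in the same way'' cannot apply to it.

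The paper places the cutoff at $c|\ln a|$ instead: beyond that scale $Q^{p+1}/w^a_i\lesssim a$ already, so Proposition~\ref{keyest} handles the tail directly; and on $|x|,|y|\le c|\ln a|$ the weight $(w^a(x))^{-1}\approx e^{-x/2}$ is at worst $a^{-c/2}$, which the $O(a^2)$ remainder absorbs provided $c$ is chosen small enough. Your far-field argument for $|x|>a^{-1/2}$ in fact extends down to $|x|\gtrsim|\ln a|$ (the estimates you sketch for $\Hi_\gamma$ and $\Gi_\gamma$ give an extra power of $w^a$ there), so the repair is simply to move the transition point from $a^{-1/2}$ to $c|\ln a|$. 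You flagged the transition region as ``especially delicate''; it is in fact where the argument as written fails.
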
 

\begin{proof}
We observe that 
\[ Q- T^0_\gamma \left[ Q^{p+1}+ \langle Q, Q_x\rangle Q_x\right] = 0 \]
since $Q$ satisfies the soliton equation. The assertion is equivalent to  
\[ \left|(T^0_\gamma-T^a_\gamma)Q^{p+1}\right| \le ca w^a(x) \] 
which
we address now.  Since $Q \lesssim e^{-|x|} $ there exists $c>0$
independent of $a$, $ \gamma$ and $p$
\[ \sup_{|x| \ge c |\ln a|}| (w^a_i)^{-1} Q^{p+1} |\le a. \] 
so that
with $\chi$ the characteristic function of the complement of $[-c |\ln
a|,c|\ln(a)|] $
\[ \sup_x (w^a(x))^{-1} |T^a_\gamma \chi Q^{p+1}(x)| \lesssim a. \] 
it suffices to verify 
 \[  \left|(T^0_\gamma-T^a_\gamma) \chi Q^{p+1}\right|  \lesssim  a w^a(x). \]
Checking the kernel  we see that 
\[   |T^0_\gamma \tilde \chi Q^{p+1}(x)  | + |T^a_\gamma \tilde \chi Q^{p+1} (x)| \lesssim  a w^a(x) \]
if $ |x| \ge 2 |\ln(a)|$. Now $|Q^p| \le e^{-p|x|}$, $Q^{s}$ is integrable whenever $s>0$   and $Q\lesssim w^a_i$. 
Thus the statement will follow from 
\begin{equation}  \sup_{|x|,|y| \le c |\ln a|}   e^{\frac23 \max\{-x,0\}} \left(K^a_\gamma(x,y)-\frac12 e^{-|x-y|} \right) e^{-3 |y|}  \lesssim a \end{equation}
which is a consequence of Lemma \ref{dependona}.
\end{proof}

\begin{prop} Let $q >4$ and $3 \le p \le q$. 
Then there exists $\varepsilon$ 
and $C>0$ so that there is a unique fixed point $u$ to 
\[  u = T^{a}_\gamma (|u|^p u-\langle u, Q_x\rangle Q_x ) \]
with 
\begin{equation}\label{lipschitza}   \sup_x  (w^a(x))^{-1}  |u(x) -Q(x) | + |\langle u, Q_x \rangle| \lesssim a \end{equation}
for
\[ \max\left\{ \left|\frac12+\gamma\right| , a \right\}  \le \varepsilon. \] 
The map $(a,\gamma,p) \to (w^a)^{-1} u \in C_b(\mathbb{R})$ is continuous. 
\end{prop}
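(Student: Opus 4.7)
The plan is to apply a quantitative Banach fixed point argument to the equivalent equation $F(v)=0$, where
\[ F(v) := v - (w^a)^{-1} T^a_\gamma\bigl[\,|v w^a|^p v w^a - \langle v w^a, Q_x\rangle\, Q_x\bigr], \]
starting from the approximate solution $v_0 := Q/w^a \in C_b(\mathbb{R})$. Since $Q$ is even and $Q_x$ is odd, $\langle Q,Q_x\rangle = 0$, so $F(v_0) = (w^a)^{-1}\bigl(Q - T^a_\gamma Q^{p+1}\bigr)$, and Lemma \ref{starting} yields $\|F(v_0)\|_{C_b} \lesssim a$.

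Next I would use that the Fréchet derivative $D_v F(v)$ is precisely the operator $S^a_v$ of Proposition \ref{invertibility}. That proposition shows $S^0_{Q/w^0}$ is invertible on $C_b$; Corollary \ref{continuity} then upgrades this to invertibility with uniformly bounded inverse on a small neighbourhood of $(a,\gamma,p,v) = (0,-\tfrac12,p,Q/w^0)$. Proposition \ref{keyest} gives a uniform bound on $D_v^2 F$ on bounded sets of $C_b$, so $D_v F$ is Lipschitz in $v$ with a constant independent of the parameters. Consequently the map
\[ \Phi(v) := v - D_v F(v_0)^{-1} F(v) \]
satisfies $\|\Phi(v_0)-v_0\|_{C_b} \lesssim a$ and is a contraction on the closed ball $\{v : \|v-v_0\|_{C_b} \le C a\}$, provided $a$ and $|\gamma+\tfrac12|$ are small enough. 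Its unique fixed point $v$ is the unique zero of $F$ in that ball, and obeys $\|v-v_0\|_{C_b} \le Ca$. Setting $u = v w^a$ recovers the sought fixed point of \eqref{fixedpoint} and gives $\sup_x (w^a)^{-1}|u-Q| \lesssim a$. The inner-product bound then follows: $|\langle u,Q_x\rangle| = |\langle u-Q,Q_x\rangle| \lesssim a \int w^a |Q_x|\, dx \lesssim a$, since $Q_x$ decays exponentially and $w^a$ is bounded where $Q_x$ is non-negligible. Continuity of $(a,\gamma,p)\mapsto v\in C_b$ follows from the uniform contraction principle combined with the continuity of $F$ in the parameters (Corollary \ref{continuity}).

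The main obstacle to be aware of is that the map $F$ is only norm-continuous, not differentiable, with respect to $a$ and $\gamma$ -- the weight $w^a$ enters in a non-smooth way at $a=0$, and the Airy/Scorer factors in $K^a_\gamma$ depend on $a$ through powers of $a^{-2/3}$. This prevents any appeal to the standard smooth implicit function theorem on the full parameter space, and explains why the present proposition only asserts continuity of $(a,\gamma,p)\mapsto (w^a)^{-1} u$ in $C_b$. Smoothness in the parameters, which is required by Theorem \ref{inverse}, cannot be read off from this fixed-point argument and will need to be recovered separately in Section \ref{sec:expansion} via the asymptotic expansion developed there.
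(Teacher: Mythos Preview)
Your proposal is correct and follows essentially the same approach as the paper. The only cosmetic difference is that the paper shifts the unknown, writing $u=Q+w^a v$ and applying the implicit function theorem at $v=0$, whereas you keep $v=u/w^a$ and run a Newton--Banach iteration from $v_0=Q/w^a$; the ingredients invoked (Lemma~\ref{starting} for the residual, Proposition~\ref{invertibility} for uniform invertibility of the linearization, Proposition~\ref{keyest} for boundedness) are identical, and your added remark deriving $|\langle u,Q_x\rangle|\lesssim a$ from $\langle Q,Q_x\rangle=0$ makes explicit a step the paper leaves implicit.
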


\begin{proof} 
We write $u =  w^a v-Q$. Then we search a fixed point to 
\[ 
\begin{split} 
  v =& (w^a)^{-1}\left(T^a_\gamma ( |w^a v+Q|^{p}(w^a v+Q)- \langle v, Q_x\rangle Q_x) -  Q \right) \\ =& (w^a)^{-1} \left( T^a_\gamma( |w^a v+Q|^{p}(w^a v+Q) -Q^{p+1} -\langle v, Q_x \rangle Q_x ) \right) 
\\ & + (w^a)^{-1} ( T^a_\gamma Q^{p+1} -Q). 
\end{split} \]
The second term on the right hand side is bounded by a constant times $a$ by Lemma \ref{starting}. The  derivative at $v=0$ is invertible
by Lemma \ref{invertibility} with a uniformly bounded inverse.  The existence of a unique fixed point with the desired properties follows now by the implicit function theorem.
\end{proof}

\section{Asymptotics and differentiability} 
\label{sec:expansion}

In the last section we have constructed a unique fixed point $u$ 
 to 
\begin{equation}\label{firststep}   u = T^a_\gamma \Big(|u|^p u -  \langle u, Q_x \rangle Q_x\Big) 
\end{equation} 
with 
\[  \Vert (u-Q)/w^a \Vert_{sup} <<  1. \]
Moreover it satisfies  
\begin{equation}    |u-Q| \le c a w^a     \end{equation} 
with a constant which is uniform in $a$, $p$ and $\gamma$.
It follows immediately from the integral representation and the decay that 
$u_x$ is square integrable. Moreover $u/w^a$ depends continuously on $a$, $p$ and 
$\gamma$ considered as a map to $C_b(\mathbb{R})$. It remains to show 
that this map is smooth for every $x$, to give bounds for the derivatives, and
to prove the uniqueness statement. Here we turn to differentiability and bounds for 
the derivatives.

As a first step and a warm up we consider the simpler term first. This term
will not enter the asymptotics of the fixed point, but we need it 
to prove differentiability with respect to $a$.

\subsection{The asymptotics of $v^a_\gamma:= T^a_\gamma Q_x$} 

We define 
\begin{equation}  c_0=c_0(a,p,\gamma) = \pi   \int \Ai_{1-\gamma}(a^{-2/3}(1+ay)) Q(y) dy 
\end{equation} 
and 
\begin{equation} d_0 = d_0(a,p,\gamma) = \pi \int \Gi_{1-\gamma}(a^{-2/3}(1+ay)) Q(y) dy. 
\end{equation}

\begin{prop} \label{firstest} 
The following estimates hold for $\kappa <1$
\begin{equation} 
   e^{ -\kappa x}\left|\partial_x^j \partial_a^k \partial_p^l 
( T^a_\gamma Q_x + c_0 \Hi_{\gamma}(a^{-2/3}(1+ax))) \right| \le \frac{c(j,k,l)}{\sqrt{1-\kappa}}    
\end{equation}
if $ x \le 0$ and if $x\ge 0$
\begin{equation}  
  e^{ \kappa x}\left|\partial_x^j \partial_a^k \partial_p^l
( T^a_\gamma Q_x + d_0 \Gi_{\gamma}(a^{-2/3}(1+ax))) \right| \le \frac{c(j,k,l)}{\sqrt{1-\kappa}}.   
\end{equation} 
Moreover there are the asymptotic series
\[     c_0   H_\gamma(a^{-2/3}) = \sum_{k=0}^\infty  \alpha_k(\gamma,p)  a^k   \]
and 
\[    d_0 \Gi_\gamma(a^{-2/3}) = \sum_{k=2}^\infty  \beta_k(\gamma,p) a^k     \]
with nontrivial leading term $\beta_2$ resp $\alpha_0$.  The coefficients are 
smooth functions of $\gamma$ and $p$, with bounds depending only on $k$.
\end{prop}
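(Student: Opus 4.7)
The plan is to read $T^a_\gamma Q_x$ directly off the integral formula \eqref{transkernel} for the kernel $K^a$, integrate by parts in $y$ to transfer $\partial_y$ from $Q_x$ onto the Airy/Scorer factors, and then isolate the single mode whose decay is not already compatible with the claimed bound. Because $\Ai_\mu'=-\Ai_{\mu+1}$, $\Gi_\mu'=-\Gi_{\mu+1}$, $\Hi_\mu'=\Hi_{\mu+1}$, the integration by parts simply raises the $y$-index by one (with a chain-rule factor $a^{1/3}$), which produces exactly the indices $1-\gamma$ appearing in $c_0$ and $d_0$. No boundary contributions appear at $\pm\infty$ because $Q$ decays exponentially; the jump of $K^a$ at $y=x$ contributes, via \eqref{wronski}, a multiple of $Q(x)$ that already has the required decay.

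For $x\le 0$ the only $x$-factor of $K^a$ that is not automatically faster-decaying than $e^{\kappa x}$ is $\Hi_\gamma(a^{-2/3}(1+ax))$, which appears in the $\chi_{y>x}$ piece paired with $\Ai_{-\gamma}$ in $y$. After integration by parts this contribution takes the form
\[
-\pi\,\Hi_\gamma(a^{-2/3}(1+ax))\int_x^\infty\Ai_{1-\gamma}(a^{-2/3}(1+ay))\,Q(y)\,dy,
\]
and writing $\int_x^\infty=\int_{\mathbb R}-\int_{-\infty}^x$ isolates exactly $-c_0\Hi_\gamma(a^{-2/3}(1+ax))$. The leftover tail integral is controlled by the pointwise asymptotics for $\Hi_\gamma,\Ai_{1-\gamma}$ from Section \ref{sec:scorer} together with the exponential decay of $Q$; combining this with the $\chi_{y<x}$ piece of the kernel (whose $x$-factors $\Ai_\gamma,\Gi_\gamma$ each decay at $-\infty$) and the elementary bound $\int_{-\infty}^x e^{-(1-\kappa)|y|}\,dy \lesssim (1-\kappa)^{-1}e^{\kappa x}$ yields the stated bound $c(j,k,l)/\sqrt{1-\kappa}$. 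The case $x\ge 0$ is strictly analogous: here the $\chi_{y<x}$ piece carries $\Gi_\gamma(a^{-2/3}(1+ax))$ as its slowest-decaying $x$-factor, and the same argument produces the subtraction $-d_0\Gi_\gamma(a^{-2/3}(1+ax))$.

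For the series $d_0\Gi_\gamma(a^{-2/3})=\sum_{k\ge 2}\beta_k a^k$ I substitute into both $\Gi_\gamma(a^{-2/3})$ and $\Gi_{1-\gamma}(a^{-2/3}(1+ay))$ the polynomial asymptotic $\Gi_\mu(z)\sim\sum_{j\ge 0}\frac{(-1/3)^j\Gamma(1+\mu+3j)}{j!\pi}z^{-1-\mu-3j}$ from Section \ref{sec:scorer}. The powers of $a$ combine as $a^{(2/3)(1+\gamma+3k)}\cdot a^{(2/3)(2-\gamma+3j)}=a^{2+2(k+j)}$, giving leading order $a^2$ with coefficient $\beta_2=\pi^{-1}\Gamma(2-\gamma)\Gamma(1+\gamma)\int Q_p\,dy\ne 0$; expanding the polynomial factor $(1+ay)^{-2+\gamma-3j}$ under the integral against $Q_p$ then fills in all integer orders $a^{2+m}$, $m\ge 0$, with coefficients smooth in $(\gamma,p)$. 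For $c_0\Hi_\gamma(a^{-2/3})=\sum_{k\ge 0}\alpha_k a^k$ the exponential prefactors $e^{\pm 2/(3a)}$ cancel when one pairs the leading exponential asymptotics of $\Hi_\gamma(a^{-2/3})$ and $\Ai_{1-\gamma}(a^{-2/3}(1+ay))$; after expanding the exponent $\frac{2}{3a}[1-(1+ay)^{3/2}]=-y-\frac{ay^2}{4}+O(a^2y^3)$ and the prefactor $(1+ay)^{1/4-\gamma/2}$ in powers of $a$, integration against $Q_p(y)$ term by term produces the asserted power series with $\alpha_0\ne 0$.

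Differentiability in $x$, $\gamma$, $p$ is inherited from the corresponding smoothness of the special functions (Section \ref{sec:scorer}) and the smooth $p$-dependence of $Q_p$; the main obstacle is smoothness in $a$ at $a=0$, since $T^a_\gamma$ itself is not smooth there, while its action on $Q_x$ becomes smooth once the singular modes $c_0\Hi_\gamma$ and $d_0\Gi_\gamma$ have been subtracted. I expect the principal technical work to be keeping the asymptotic expansions from Section \ref{sec:scorer} uniform in $(\gamma,p)$ on the relevant ranges and verifying that they can be differentiated term by term, from which the derivative bounds and the smoothness statement both follow.
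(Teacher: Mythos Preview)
Your approach is correct and reaches the same remainder formulas as the paper, but the route differs in one notable respect. For $x\le 0$ the paper does not split the kernel $K^a$ directly; instead it introduces the \emph{forward} fundamental solution $K_L$ (all pieces supported on $y<x$), notes that $\int_{-\infty}^x K_L(x,y)Q_{xx}(y)\,dy$ is a particular solution of \eqref{linearQ}, and then argues by ODE uniqueness: both this expression and $v^a_\gamma=T^a_\gamma Q_x$ are solutions bounded by $w^a$ on $(-\infty,0]$, so their difference is a multiple of $\Hi_\gamma$, and the choice of $c_0$ pins that multiple to zero. Your manoeuvre---integrate by parts once more in $K^a$ and write $\int_x^\infty=\int_{\mathbb R}-\int_{-\infty}^x$---produces the very same remainder $\pi\Hi_\gamma(x')\int_{-\infty}^x\Ai_{1-\gamma}(y')Q(y)\,dy$ plus the $\chi_{y<x}$ pieces, only by direct computation rather than by uniqueness; this is more hands-on but arguably more transparent, and it sidesteps the somewhat delicate question of which homogeneous solutions are actually dominated by $w^a$. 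For $x\ge 0$ the paper also works with $K^a$ and the two arguments essentially coincide. For the series the paper Taylor-expands $\Gi_{1-\gamma}(a^{-2/3}(1+ay))$ in $y$ (producing moments $m_k=\int y^kQ$) rather than substituting the large-$z$ asymptotic of $\Gi_\mu$ as you do, and for $c_0$ it uses an integration-by-parts trick based on the observation that each $y$-derivative of $\Ai_{1-\gamma}(a^{-2/3}(1+ay))e^y$ gains a factor $a$---which is exactly your expansion $\tfrac{2}{3a}[1-(1+ay)^{3/2}]=-y-\tfrac14 ay^2+\cdots$ repackaged. One small point: your elementary bound $\int_{-\infty}^x e^{-(1-\kappa)|y|}dy\lesssim(1-\kappa)^{-1}e^{\kappa x}$ gives $(1-\kappa)^{-1}$ rather than the stated $(1-\kappa)^{-1/2}$; the paper is equally terse on where the square root comes from, and since only the qualitative consequences are used downstream this does not affect the argument.
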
 

\begin{proof} 

We can define a  solution to the linear equation 
\begin{equation} \label{linearQ}  a(\gamma v + xv_x) - v_{xxx} +v_x = Q_{xx}  \end{equation} 
by an integral kernel  $K_L$, supported in $ y<x$, which  is given by (compare with Theorem \ref{kernelorig}) 
\begin{equation}   
\begin{split} 
\frac{a^{2/3}K_L (x,y)}{\pi}  =  &  \Hi_{-1-\gamma} (a^{-2/3}(1+ay))\Ai_{\gamma}(a^{-2/3}(1+ax))\\ &\  \qquad +  \Ai_{-1-\gamma}(a^{-2/3}(1+ay)) \Hi_\gamma(a^{-2/3}(1+ax)) 
\\ & + \sin(\gamma \pi) \Big(\Gi_{-1-\gamma} (a^{-2/3}(1+ay)) \Gi_\gamma(a^{-2/3}(1+ax))  \\ & \ \qquad +    \Ai_{-1-\gamma}(a^{-2/3}(1+ay)) \Ai_\gamma(a^{-2/3}(1+ax))\Big)
\\ &  +\cos(\gamma \pi)\Big(  \Ai_{-1-\gamma} (a^{-2/3}(1+ay)) \Gi_\gamma(a^{-2/3}(1+ax)) 
\\ & \qquad  -  \Gi_{-1-\gamma}(a^{-2/3}(1+ay)) \Ai_\gamma(a^{-2/3}(1+ax)) \Big).
\end{split}
\end{equation}
Two solutions to \eqref{linearQ} differ by a solution to the homogeneous problem. The formula  
\[    \int_{-\infty}^x K_L(x,y) Q_{xx}(y) dy  - c_0  \Hi_\gamma(a^{-2/3}(1+ax)) 
\]
defines a solution to \eqref{linearQ}
hence it differs from $v^a_\gamma$ by a solution to the homogeneous equation. 
Both functions and their derivatives are bounded by a multiple of $w^a$ for $x \le 0$, and hence their difference is   a multiple of $\Hi_\gamma$.  
But the coefficients of the leading term are the same because of the choice of  $c_0$, and hence both are the same. 

We have
\[  \partial_x^j \left[v-c_0 \Hi_\gamma(a^{-2/3}(1+ax)\right]   =  \int_{-\infty}^x \partial_x^j K_L(x,y) Q_{xx}(y) dy 
 \]
if $j=0,1,2$. For $j\ge 3$ there is an additional term consisting of a finite 
sum of $Q$ and its derivatives. The kernel obviously reproduces exponential decay
up to polynomial factors.  The leading contribution for $ \kappa \to 1$ comes from 
$x $ close to $1$.

We argue similarly for $x \ge 0$, but this time with the standard  kernel $K^a_\gamma$.
The leading part comes from  $a^{-1/3}\Gi_\gamma(a^{-2/3}(1+ax)\Gi_{-\gamma}(a^{-2/3}(1+ax))$.  
 The products of $\Ai_\gamma \Hi_{-\gamma} $ and $\Ai_{-\gamma} \Hi_\gamma$ and reproduce  exponential decay if $\kappa  <1$ and with a constant $\sqrt{1-\mu}$ if $\mu$ is close to $1$. The other components 
of the kernel are supported in $y\ge x$. 
They reproduce  the exponential decay of $Q_{xx}$.
 We verify the asymptotic formulas for $c_0$ and $d_0$ and we  begin with $d_0$. 
Let 
\[ m_k = \int x^k Q(x) dx. \]
be the moments of $Q$.  
They are  smooth functions of  $p$, and independent of $\gamma$ and $a$.
A Taylor expansion of $\Gi_{1-\gamma}$ gives  the asymptotic series
 \[ 
\begin{split} 
 \int \Gi_{1-\gamma}(a^{-2/3}(1+ay)) Q(y) dy 
\sim &       \sum_{k=0}^\infty \frac{1}{k!}   m_k  a^{k/3}   \Gi_{1+k-\gamma}(a^{-2/3}) 
\\ \sim &  a^{2-\frac{2\gamma}3} ( m_0 + (m_1+  m_0)  a + \dots  )  
\\ \sim &( \Gi_\gamma(a^{-2/3}))^{-1}  ( \sum_{k=2}^\infty  \beta_k a^k ) 
\end{split} 
\]
with $\beta_2  \ne 0$.

 Differentiability of the coefficients with respect to $p$ 
is obvious. Differentiability with respect to $\gamma$ follows from the 
differentiability of $\Gi_{1-\gamma}$ with respect to $\gamma$ and the corresponding bounds. 
The difference to a partial sum is easily controlled by the estimates for
$\Gi_\gamma$ and its derivatives.

For the expansion of $c_0$  we write 
\[
\begin{split} 
 c_0 = &  a^{-2/3} \int \Ai_{1-\gamma}(a^{-2/3}(1+ay))e^{y} (e^{-y} Q(y) ) dy 
  \\ = &- a^{-2/3} \int  \int_0^y\left[  \Ai_{1-\gamma}(a^{-2/3}(1+as))e^s\right] ds \frac{d}{dy} (e^{-y} Q(y)) dy. 
\end{split}
\] 
The function $\frac{d}{dy} (e^{-y } Q(y))$ is a Schwartz function with exponential decay.
The zeroth  moment is $-1$, but $\int_0^x \Ai_{1-\gamma}$ vanishes at 
$x=0$,   and the leading contribution  comes from the next term, 
\[ 
\begin{split} 
c_0 =& a^{-2/3}  \Ai_{1-\gamma}(a^{-2/3}) \int y \frac{d}{dy} (e^{-y}Q(y)) dy 
\\ & +\left( \frac14+\frac{\gamma}2\right)   (a^{1/3}  \Ai_{1-\gamma}(a^{-2/3}) ) 
\int y^2 \frac{d}{dy} e^{-y} Q(y) dy \dots
\\ = & (\Hi_\gamma(a^{-2/3}))^{-1} ( \sum_{j=0}^\infty  \alpha_j  a^j)  
\end{split}  
\]
with $\alpha_0 \ne 0$.  Any derivative on $ \Ai_{1-\gamma}(a^{-2/3}(1+ax))e^x $ gains us a factor $a$. This is   a consequence of the multiplication by $e^x$. We used the expansion of $\Hi_\gamma$ in this expansion. 
The derivatives  with respect to $p$ fall only on $Q$, and hence they are  easy to estimate.  
The Scorer functions are differentiable with respect to $\gamma$. This implies 
the statement on the differentiablity of the coefficients.  
\end{proof} 

As a consequence we have 
\[   |\langle u, Q_x \rangle (v^a_\gamma  - c_0 \Hi_{\gamma}(a^{-2/3}(1+ax)))| \le c a  e^{\kappa x} \]
hence 
\[ |\langle u, Q_x \rangle |v^a_\gamma | \le ca \left( \frac{\Hi_{\gamma}(a^{-\frac23} (1+ax)) }{ \Hi_\gamma(a^{-\frac23} ) }   +  \frac{e^{\kappa x}}{\sqrt{1-\kappa}} \right)  \]
for $ x\le  0 $ 
and similarly, for $ x>0$, 
 \begin{equation}    |\langle u, Q_x \rangle v^a | \le c a \left( a^{2}  \frac{\Gi_{\gamma}(a^{-2/3}(1+ax))}{\Gi_{-\gamma}(a^{-2/3})}  +  \frac{e^{-\kappa x}}{\sqrt{1-\kappa}} \right)  
\end{equation} 
In the sequel we will only rely on those two estimates, and not on the
full statement of Proposition \ref{firstest}.

\subsection{Bounds for the fixpoints and derivatives with respect to $x$}

After this warm-up we turn to the nonlinear term. Let 
\[ c_1 =   a^{-1/3} \int \Ai_{-\gamma}(a^{-2/3}(1+ay)) u^{p+1}(y) dy \]
and 
\[ d_1 = a^{-4/3} \int \Gi_{-\gamma}(a^{-2/3}(1+ay)) u^{p+1}(y) dy. \]
These integrals exist since $|u| \lesssim   w^a(x)$ . Using the bound 
\[ |u-Q| \le ca w^a(x) \]
of the previous section we see that 
\[   |u^{p+1} - Q^{p+1}| \le ca w^a_i(x) \]
and, as in the previous subsection  
\[  c_1=(1+O(a))  a^{-1/3} \Ai_{-\gamma}(a^{-2/3}) \int Q^{p+1}(y) dy.  \]
Thus 
\begin{equation}  c_1  \sim  ( \Hi_{\gamma}(a^{-2/3})^{-1}). \end{equation}
and similarly 
\begin{equation}  d_1 \sim  a^{-\frac{2(1+\gamma)}3}.  \end{equation}

The function $u^{p+1}$ decays sufficiently fast 
to repeat the argument of the last section. Thus 
\[ w_L(x): =  u(x) - v^a-  c_1  \Hi_{\gamma}(a^{-2/3}(1+ax))= \int_{-\infty}^x K_L^a(x,y) \partial_y u^{p+1}(y) dy \]
and  we also have the obvious integral representation for 
\[ w_R(x):= u(x) -v^a - d_1  \Gi_{\gamma}. \]

Thus we obtain the very rough estimate, using $p\ge 3$ and $\gamma$ close to $-1/2$  
\[ |w_L(x) | \le  c^{p+1}    \int_{-\infty}^x |\partial_y K_L(x,y)| (w^a(x))^{p+1} dx   \le c \left(\frac{\Hi_{\gamma}(a^{-2/3}(1+ax))}{\Hi_{\gamma}(a^{-2/3})}  \right)^4.     \]
 We put this information in the expansion. The oscillatory part of the kernel gives a small contribution when applied to $|x|^{-(\gamma+1)(p+1)}$
resp. $(\Hi_{\gamma}(a^{-2/2}(1+ax)))^{p+1}$ hence,
with $\omega= -(1+\gamma)(p+1)-p-2$

\begin{equation}  |w_L(x)| \lesssim 
\left\{ \begin{array}{ll}  
 a^{-1/3} |1+ a^{-2/3}(1+ax)|^{\omega} (\Hi_\gamma(a^{-2/3}))^{-p-1}& \text{ if } x\le -a^{-1}  \\[2mm] 
    |a^{2/3}+ (1+ax)|^{-1/2}       \left(   \frac{\Hi_\gamma (a^{-2/3}(1+ax))}{ \Hi_\gamma(a^{-2/3})}\right)^{p+1} 
& \text{ if } -a^{-1} \le x \le 0. \end{array} \right. . 
\end{equation}

Similarly we repeat the arguments from the last section on the right hand side. 
In a first step 
\[ |w_R(x)| \le c \left(e^{- x} + 
a \frac{\Gi_\gamma(a^{-2/3} (1+ax))}{\Gi_\gamma(a^{-2/3})}\right).
 \] 
We plug this into the integral operator. The exponential part with $\Ai$ and $\Hi$ 
reproduces the decay $ (Q+ c a(1+ax)^{-1-\gamma})^{p+1}$.  The second potentially large 
contribution is bounded by  
\[ \begin{split}  a \int_x^\infty (1+ay)^{-1+\gamma}  (Q+ ca(1+ax)^{-1-\gamma})^{-(p+1) } dy (1+ax)^{-1-\gamma} & \\  & \hspace{-5cm} \lesssim   a e^{-x}  + a^{p+1} (1+ax)^{-(p+1)\gamma-(p+2)} 
\end{split}
\]
 The exponential part again reproduces 
the decay and we arrive at 
\[ |w_R(x)| \le c    e^{-x} + a^{p+1} (1+ax)^{-(1+\gamma)(1+p)-1} \]
for $x >0$
All three  expansions remain correct under differentiation. We collect the estimates
in the following lemma. 
\begin{lemma} 
There exists $\varepsilon>0$ so that for 
\[ 0\le a \le \varepsilon, \quad \left|\frac12+\gamma\right| \le \varepsilon, \qquad 3\le p \le q \] 
 and the fixed point $u$ the following is true. 
Let 
\[ w_L=  u-v^a-c_1 \Hi_{\gamma}(a^{-2/3}(1+ax))-Q.  \]
Then 
\[  | \partial_x^k w_L  | 
\le c   
\left\{ \begin{array}{cl}
 \frac{\displaystyle a^{-1/3}|1+a^{-2/3}(1+ax)|^{-(p+1)\gamma-p-2}}{\displaystyle  (\Hi_\gamma(a^{-2/3}))^{p+1} }  & \text{ if } x \le -a^{-1} \\[3mm]
   (a^{2/3}+(1+ax))^{\frac{k-1}2} \left(\frac{\Hi_{\gamma}(a^{-2/3}(1+ax))}{ \Hi_\gamma(a^{-2/3})}  \right)^{p+1} & \text{ if } -a^{-1} \le x \le 0 
\end{array} \right. . 
\]
 and with 
\[ w_R =  u-v^a- ad_1 \Gi_{\gamma}(a^{-2/3}(1+ax))-Q \]
the estimate
\[   |\partial_x^k w_R | 
\le c_k a \Big[ e^{-x} +   a^{p+k} (1+ax))^{-(p+1)\gamma-p-2-k} \Big]  
\]
holds.
 The sum $(c_0+c_1) $ und $d_1$  are  bounded and bounded from below by a positive constant,
independent of $p$, $\gamma$ and $a$. 
Finally
\begin{equation} \label{posleft}      c^{-1}  \frac{\Hi_{\gamma}(a^{-2/3}(1+ax))}{\Hi_\gamma(a^{-\frac23) }}  \le u \le c   \frac{\Hi_{\gamma}(a^{-2/3}(1+ax))}{\Hi_\gamma(a^{-\frac23}) }
 \end{equation} 
if $x \le 0$ and 
\begin{equation} \label{posright}   c^{-1} (e^{- x} +  a (1+ax)^{-1-\gamma})  \le u \le c (e^{- x/2} + a (1+ax)^{-1-\gamma}) \end{equation} 
if $x \ge 0 $. 
\end {lemma}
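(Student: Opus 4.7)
The integral representations for $w_L$ and $w_R$ stated in the displayed computations immediately preceding the lemma already encode most of the work; the lemma consolidates those estimates, extends them to all $x$-derivatives, and adds sharp two-sided control on $u$ and the coefficients $c_0+c_1, d_1$. I would organize the proof into the following steps.

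Starting from the fixed-point equation $u = T^a_\gamma(u^{p+1}) - \langle u, Q_x\rangle v^a_\gamma$ and using Lemma \ref{starting} to handle the discrepancy $T^a_\gamma Q^{p+1} - Q = O(aw^a)$, I would represent $w_L$ and $w_R$ as integrals against the forward Green's kernel $K_L^a$ of $\partial_y u^{p+1}$, with $c_1$ and $d_1$ designed, in the same spirit as $c_0, d_0$ in Proposition \ref{firstest}, to peel off the leading left- and right-tail boundary contributions. The bootstrap then runs as in the paragraphs above the lemma: the rough input $|u| \le c w^a$ gives via Lemma \ref{kernelest} the crude bound $|w_L| \lesssim (\Hi_\gamma(a^{-2/3}(1+ax))/\Hi_\gamma(a^{-2/3}))^4$, and each reinsertion into the integral reproduces the $\Hi_\gamma$-decay while gaining one additional factor of the same ratio. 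Iterating until the nonlinearity saturates gives the power $p+1$ claimed on $-a^{-1}\le x\le 0$; matching to the polynomial asymptotics of $\Hi_\gamma$ for negative arguments extends this to $x \le -a^{-1}$. The analogous bootstrap with $\Gi_\gamma$ on the right produces the stated bound on $w_R$.

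For the derivative bounds in $x$, I would differentiate the integral representation directly. By \eqref{Aider}, each $\partial_x$ turns $\Hi_\gamma(a^{-2/3}(1+ax))$ into $a^{1/3}\Hi_{\gamma+1}(a^{-2/3}(1+ax))$, and on $-a^{-1}\le x\le 0$ the asymptotic \eqref{hiasym} gives $\Hi_{\gamma+1}/\Hi_\gamma \sim (a^{-2/3}(1+ax))^{1/2}$, producing exactly the factor $(a^{2/3}+(1+ax))^{1/2}$ per derivative. For $k \ge 3$ the ODE $\Hi_\gamma''' = (1+\gamma)\Hi_\gamma + x\Hi_\gamma'$ converts higher derivatives into lower ones, while the extra Schwartz contributions from differentiating $Q^{p+1}$ are absorbed in the rapidly decaying bounds. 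On the right, $\partial_x\Gi_\gamma$ yields a factor $a(1+ax)^{-1}$, which accounts for the polynomial loss in the $w_R$-bound.

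For the coefficient lower bounds, substituting $u = Q + O(aw^a)$ into the defining integrals gives
\begin{equation*}
c_1 = (1+O(a))\,a^{-1/3}\Ai_{-\gamma}(a^{-2/3})\int Q^{p+1}\,dy, \qquad d_1 = (1+O(a))\,a^{-4/3}\Gi_{-\gamma}(a^{-2/3})\int Q^{p+1}\,dy.
\end{equation*}
Since $\int Q^{p+1}>0$ and the Airy/Scorer values on the large positive argument $a^{-2/3}$ are strictly positive by \eqref{airyasym} and the $\Gi$-asymptotics, both are bounded below uniformly by the stated orders; Proposition \ref{firstest} shows that $c_0$ shares the sign of $c_1$ at leading order, so $c_0+c_1$ is also bounded away from zero. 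The positivity \eqref{posleft} then follows from the decomposition $u = Q + c_1\Hi_\gamma(a^{-2/3}(1+ax)) - \langle u,Q_x\rangle v^a_\gamma + w_L$: the second term is strictly positive and dominates on $x\le 0$ (using Proposition \ref{firstest} to bound $|\langle u,Q_x\rangle v^a_\gamma|$ by $O(a)$ times the same Scorer ratio, and the bootstrap to bound $|w_L|$ by the $(p+1)$-th power of that ratio), with $Q\ge 0$ contributing in the same sense. The bound \eqref{posright} is obtained identically with $\Gi_\gamma$ replacing $\Hi_\gamma$ and the exponential $e^{-x}$ coming from $Q$.

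The main technical obstacle is the uniformity of constants in the bootstrap as $a\to 0$ and $\gamma\to-\tfrac12$ simultaneously, since several normalization factors in the Airy/Scorer asymptotics degenerate in that limit; this is precisely what the tailored weights in Proposition \ref{keyest} are engineered to absorb.
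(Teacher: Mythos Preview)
Your proposal is correct and follows essentially the same approach as the paper: the bounds on $w_L$, $w_R$, their derivatives, and the coefficients are all derived in the paragraphs preceding the lemma (which the paper's proof simply refers back to), and the two-sided positivity bounds \eqref{posleft}--\eqref{posright} are obtained exactly as you describe, by splitting into a bounded region where $|u-Q|\le a w^a$ forces $u\approx Q>0$, and a far region where the $c_1\Hi_\gamma$ (resp.\ $ad_1\Gi_\gamma$) term dominates the decomposition. One small imprecision: the power $(p+1)$ in the $w_L$ bound is not obtained by repeated iteration gaining one factor at a time, but in a single reinsertion once the first crude bound establishes $u\sim \Hi_\gamma(a^{-2/3}(1+ax))/\Hi_\gamma(a^{-2/3})$, after which $u^{p+1}$ directly carries the full power into the integral.
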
 

\begin{proof} Only the last two statements need to be shown. Since 
\[   |u-Q| \le a w^a \]
the statement is obvious for $ |x| \le   |\ln a|/2$. 

For $x \le -|\ln a|+R $ and $a$ sufficiently small the term 
$\frac{\Hi_\gamma(a^{-2/3}(1+ax))}{\Hi_\gamma(a^{-2/3})}$ becomes 
dominant and ensures positivity for those $x$. The same argument applies on the 
right hand side. 
\end{proof} 

 In particular $u$ is positive and bounded from below by the same type of bounds 
as from above.  

\subsection{Derivatives with respect to $\gamma$ and $a$}

The result of this subsection concludes the proof. 

\begin{prop} \label{sharpdecay} The fixed point $u$ is infinitely often  differentiable with respect to $x$, $a$, $\gamma$ and $p$ up to $a=0$. Moreover the estimate 
\[\begin{split}  |\partial_x^k \partial^l_a \partial_p^m \partial_\gamma^n u| & \\ & \hspace{-2cm} \lesssim  \left\{ 
\begin{array}{cl} 
  (a^ {-2/3}+|1+ax|)^{-1-\gamma-k-n} |\ln(2+|ax|)|^n  / \Hi_{\gamma}(a^{-2/3}) & \text{ if } x < -a^{-1} \\ 
\Hi_\gamma(a^{-2/3}(1+ax))/ \Hi_\gamma(a^{-2/3}) & \text{ if }  -a^{-1} \le x \le 0 \\
e^{-x} + a^{1+k }  (1+ax)^{-1- \gamma-k-n}|\ln (2+ax)|^n     & \text{ if } x \ge 0 \\
\end{array} \right. 
\end{split} 
\]
holds with a constant depending only on $k,l,m$ and $n$. 
\end{prop}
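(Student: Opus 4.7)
The plan is to establish smoothness pointwise via the fixed point equation $u = T^a_\gamma N(u)$ with $N(u) = |u|^p u - \langle u, Q_x\rangle Q_x$. Formal differentiation in any parameter $\mu \in \{a,\gamma,p\}$ yields
\[ S^a_v \partial_\mu u = (\partial_\mu T^a_\gamma) N(u) + T^a_\gamma (\partial_\mu N)(u), \]
where $S^a_v$ is the linear operator inverted in Proposition \ref{invertibility}. For $\mu = p$ the source lies in the weighted space on which $S^a_v$ is uniformly invertible (the operator $T^a_\gamma$ is $p$-independent), so differentiability in $p$ is immediate. The difficulty already flagged is that $\partial_a T^a_\gamma$ and $\partial_\gamma T^a_\gamma$ produce kernels that do not preserve the weighted space used to set up the contraction, because the weights $w^a$ and $w^a_i$ themselves depend singularly on $(a,\gamma)$ through the Airy transition scale $a^{-1/3}$.

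To circumvent this, I would use the explicit asymptotic decomposition from the previous subsection and write
\[ u = Q + v^a + c_1\, \Hi_\gamma(a^{-2/3}(1+ax)) + a\, d_1\, \Gi_\gamma(a^{-2/3}(1+ax))\chi_{x>0} + w, \]
where the remainder $w$ is controlled by $(w^a)^{p+1}$, strictly better than the weight $w^a$ of $u$ itself. The four explicit pieces are pointwise $C^\infty$ in $(a,\gamma,p)$: smoothness in $\gamma$ follows from the entire analytic dependence of $\Ai_\gamma$, $\Hi_\gamma$, $\Gi_\gamma$ on $\gamma$ established in Section \ref{sec:scorer}; smoothness of the composition $a \mapsto \Hi_\gamma(a^{-2/3}(1+ax))$ up to $a = 0$ follows from the asymptotic expansions \eqref{airyasym}, \eqref{hiasym} via the argument recorded after Lemma \ref{dependona}. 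Direct differentiation of the explicit profile term by term yields the claimed bounds. The logarithmic factors $|\log(2+|ax|)|^n$ arise precisely from
\[ \partial_\gamma^n (1+ax)^{-1-\gamma} = (-\log(1+ax))^n\, (1+ax)^{-1-\gamma}, \]
and from analogous $\gamma$-derivatives of the Scorer asymptotic series.

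It then remains to show that the remainder $w$ is smooth in $(a,\gamma,p)$ with a bound of order $(w^a)^{p+1}$ for each derivative. The equation for $w$ has source obtained by inserting the explicit profile into $N$ and subtracting the contribution already represented by the profile; this source is of order $(w^a)^{p+1}$. Differentiating this equation, moving the linearization to the left as above, and inverting via Propositions \ref{keyest} and \ref{invertibility} produces $\partial_\mu w$; the spare decay $p+1>1$ now provides room to absorb the non-uniformity of $\partial_\mu T^a_\gamma$, because $\partial_\mu w^a/w^a$ grows at worst polynomially in $|x|$ in each of the three regions, and this polynomial growth is dominated by the improved weight $(w^a)^{p+1}/w^a$. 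A double induction — on the total order of $(\partial_a,\partial_\gamma)$ and on the order of $\partial_p$ — then delivers all mixed estimates, with $\partial_x^k$ bounds obtained by differentiating the kernel $K^a$ directly under the integral sign.

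The hard part is precisely this step: controlling the derivatives of $T^a_\gamma$ and of the weights across the three regions $x<-a^{-1}$, $-a^{-1}\le x\le 0$, $x>0$, where $w^a$ changes character and naive $a$-derivatives of $w^a$ produce factors like $x/a^2$ that would destroy weighted boundedness. The asymptotic decomposition is the mechanism that makes the whole argument work: it transfers all singular $(a,\gamma)$-dependence onto the explicit Airy and Scorer profiles, whose derivatives are known in closed form and yield exactly the stated bounds with the logarithmic corrections in $n$.
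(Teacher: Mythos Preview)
Your approach is essentially the paper's: isolate the explicit $\Hi_\gamma$ and $\Gi_\gamma$ profiles that carry the singular $(a,\gamma)$-dependence, differentiate those by hand (producing exactly the logarithmic losses), and show the remainder is smooth by inverting the linearization via Proposition~\ref{invertibility}. The justification through finite differences and the inductive step are also the same.

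The one noteworthy difference is where you take the derivative. You differentiate the integral equation $u=T^a_\gamma N(u)$ and therefore have to control $(\partial_\mu T^a_\gamma)N(u)$, which you handle by arguing that the extra decay $(w^a)^{p+1}$ of $N(u)$ absorbs the growth of $\partial_\mu K^a$ and $\partial_\mu w^a/w^a$. The paper instead differentiates the ODE
\[
a\big((1+\gamma)u+xu_x\big)-u_{xxx}+u_x-\partial_x\big(|u|^pu-\langle u,Q_x\rangle Q_x\big)=0,
\]
so that the linear operator acting on $\dot u$ is unchanged and the only new term is the source $-au$ (for $\partial_\gamma$). Since $\dot\Hi_\gamma$ solves the same linear equation with source $-a\Hi_\gamma$, and likewise $\dot\Gi_\gamma$ with source $-a\Gi_\gamma$, subtracting $(c_0+c_1)\eta_-\dot\Hi_\gamma + a(d_0+d_1)\eta_+\dot\Gi_\gamma$ from $\dot u$ cancels the non-integrable part of $-au$ directly, and the residual source lands in the range where Proposition~\ref{invertibility} applies with no need to ever touch $\partial_\mu T^a_\gamma$. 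For the $a$-derivative the paper additionally invokes the weights $w^a$ with $k>0$ in \eqref{wa}, which is the precise mechanism for absorbing the factor $x^n/(1+ax)^n$ generated by $\partial_a^n$; you allude to this only implicitly. Both routes work, but the paper's ODE differentiation is the more economical one.
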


The bounds are exactly the bounds for the derivatives of 
\[ a (1+ax)^{-1-\gamma} \]
plus a Schwartz function, 
resp. for $x\le 0$
\[   \Hi_\gamma(a^{-2/3}(1+ax)) / \Hi_\gamma(a^{-2/3}) \]
Proposition \ref{sharpdecay} completes the proof of Theorem \ref{inverse}.

Despite considering  a nonsmooth nonlinearity the fixed point will be smooth. 
This is compatible with the nonregularity of the power function since the 
fixed point $u$   is positive.

\begin{proof} 
 The differentiation with respect to $p$ is  simpler than the differentiation with 
respect to $a$ and $ \gamma$, and we ignore it.  We differentiate  
\[  a( (1+\gamma)u + xu_x) -u_{xxx}+u_x + \partial_x (|u|^p u + \langle u, Q_x\rangle Q_x) = 0 \]
with respect to $\gamma$ formally and  denote the derivative again by
$\dot u$. It satisfies 
\[  a( (1+\gamma)\dot u + x\dot u_x) -\dot u_{xxx}+\dot u_x + \partial_x ((p+1)|u|^p \dot u + \langle \dot u, Q_x\rangle Q_x) =   -au   \]
By Proposition \ref{invertibility} the linear operator is invertible, and we want  estimate $\dot u$ in terms 
of $u$. However,  we do not have the bound $|u| \le w^a_i$ for $ |x| \le -a^{-1}$ since there $w_i^a $ is not bounded by $w^a$.  

We choose a smooth monotone function $\eta_+$, supported in $[-1,\infty)$ and identically one in $[1,\infty)$. Let $\eta_(x) = 1-\eta_+(x)$. 
We denote 
\[ \dot \Hi_\gamma = \frac{\d}{\d\gamma} \Hi_\gamma \]
and 
\[ \dot \Gi_\gamma = \frac{\d}{\d\gamma} \Gi_\gamma \]
Then 
\[  a( (1+\gamma) \dot \Hi_\gamma + x \dot \Hi_\gamma') - \dot \Hi_{\gamma}''' + \dot \Hi_\gamma' = -a  \Hi_\gamma \]
Let 
\[ \dot v = \dot u - (c_0+c_1) \eta_-\dot \Hi_\gamma- a (d_0+ d_1)\eta_+ \dot \Gi_\gamma \]
Then 
\[ \begin{split} 
 a(1+\gamma) \dot v + x\dot v_x) -\dot v_{xxx}+ \dot v_x + \partial_x ((p+1)|u|^p \dot v + \langle \dot v, Q_x\rangle Q_x)  \hspace{-7cm} & \\   = & \phi
 -a(u-(c_0+c_1 ) \eta_-\Hi_\gamma - a(d_0+d_1) \eta_+\Gi_\gamma )  
\\ &  - \partial_x((p+1) |u|^p \Big[(c_0+c_1) \eta_-\dot \Hi_\gamma + a(d_0+d_1)\eta_+ \dot \Gi_\gamma \Big]  
\\ & + \langle  (c_0+c_1) \eta_-\dot \Hi_\gamma + a(d_0+d_1) \eta_+\dot \Gi_\gamma , Q_x \rangle Q_x  .               
\end{split} 
 \]
for some smooth function $\phi$ supported in $[-1,1]$. 
The right hand side decays sufficiently fast to apply Proposition \ref{invertibility}. To justify this formal argument we use finite differences. 
Continuity with respect to all parameters is obvious. This argument can be iterated. 

Similarly we deal with derivatives with respect to $a$. 
The partial derivatives  $\partial_a^n \Gi_\gamma (a^{-1/2}(1+ax))$   
behave similarly as  
\[ \partial_a^n (1+ax)^{-1-\gamma}= c_{\gamma,n} \frac{ x^n}{(1+ax)^{-n-\gamma} } . \]
Again Proposition \ref{invertibility} implies differentiability with respect to $a$, for $a>0$, but this time we have to use weights with 
$k>0$.

There is basically no difference in applying this argument 
to the derivative with respect to $a$, $p$ or $x$, using crucially the estimate \eqref{posleft} and \eqref{posright}.  

 \end{proof}

\subsection{Expansion of the selfsimilar solution}

The argument above gives information on  the asymptotics of the self similar solutions which we state below. 

\begin{prop}
Let $u= u(a)$ be the selfsimilar solution orthogonal to $Q_x$. Then exists a unique  expansion
\[  \left| u(x) - a(1+ax)^{-2/p} \sum_{j=0}^N d_j(a)  (1+ax)^{-3j}\right| \le a c_N (1+ax)^{-3N-3}        \]
for $x >1$, where $d_j$ are bounded uniformly in $a$, and $c_N$ is independent of $a$ and
\[  \left| u(x) -  (\Hi_\gamma(a^{-2/3}))^{-1}|1+ax|^{-2/p} \sum_{j=0}^N d_j(a)  (1+ax)^{-3j}\right| \le a c_N (1+|ax|)^{-3N-3}        \]
for $ x < -2a^{-1} $. 
\end{prop}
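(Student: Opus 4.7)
The proof rests on a matched asymptotic expansion followed by an integral-equation estimate of the remainder. Set $\xi := 1+ax$ and $\gamma = 2/p-1$, so that $1+\gamma = 2/p$. The selfsimilar solution $u$ satisfies the reduced equation
\[
a\bigl((1+\gamma)u + xu_x\bigr) - u_{xxx} + u_x - (u^{p+1})_x = 0,
\]
the projection term $\langle u, Q_x\rangle Q_x$ having dropped out by orthogonality. A direct computation yields the key algebraic identity
\[
\bigl[\,a\bigl((1+\gamma) + x\partial_x\bigr) + \partial_x\,\bigr]\bigl(\xi^{-1-\gamma-3j}\bigr) = -3j\, a\, \xi^{-1-\gamma-3j}.
\]
In particular the leading term $a\xi^{-1-\gamma}$ ($j=0$) is annihilated by the first-order scaling operator, consistent with the leading asymptotic already established in Proposition~\ref{sharpdecay}, while each higher-order term $\xi^{-1-\gamma-3j}$ is an eigenvector with eigenvalue $-3ja$.

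For $x>1$ I would substitute the ansatz $u = a\xi^{-1-\gamma}\sum_{j=0}^N d_j(a)\xi^{-3j} + R_N$ and match coefficients of $\xi^{-1-\gamma-3k}$ in the residual. The dispersive term $-u_{xxx}$ applied to the $k$-th piece produces a source at order $\xi^{-1-\gamma-3(k+1)}$ with coefficient $a^4 d_k(1+\gamma+3k)(2+\gamma+3k)(3+\gamma+3k)$; since $(p+1)(1+\gamma) = 2+2/p$, the nonlinearity $-(u^{p+1})_x$ contributes analogously with factors of $a^{p+2}$ and polynomial combinations of $d_0,\dots,d_k$. Solving the resulting triangular system yields
\[
d_{k+1}(a) = \frac{1}{3(k+1)}\Bigl[a^2 d_k(1+\gamma+3k)(2+\gamma+3k)(3+\gamma+3k) + a^{p}\cdot P_{k+1}(d_0,\dots,d_k)\Bigr],
\]
so each $d_k(a)$ is bounded uniformly in $a\in[0,\varepsilon)$; the constant $d_0(a)$ is pinned down by matching with the leading behavior identified in the proof of Proposition~\ref{sharpdecay}. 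The remainder $R_N$ satisfies the Picard equation derived from $u = T^a_\gamma(u^{p+1})$ with a source of size $O(a\,\xi^{-2/p-3N-3})$; by Proposition~\ref{keyest} the kernel $K^a_\gamma$ reproduces this algebraic decay on $\{x>0\}$, and a contraction argument analogous to Section~\ref{sec:weighted} gives $|R_N(x)|\le c_N a\xi^{-3N-3}$ on $\{x>1\}$.

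For $x<-2a^{-1}$ the function $\Hi_\gamma(a^{-2/3}\xi)/\Hi_\gamma(a^{-2/3})$ is an exact solution of the linear scaling equation, and its algebraic asymptotic series at $-\infty$ derived in Section~\ref{sec:scorer} (the display preceding~\eqref{hiasym}) furnishes an explicit expansion $|\xi|^{-1-\gamma}\sum_j c_j|\xi|^{-3j}$. The nonlinear contribution $(u^{p+1})_x$ is superalgebraically small relative to the scaling operator in this region because $u$ inherits its rapid decay from $\Hi_\gamma$, so the same ansatz-and-remainder scheme applies with the $\Hi$-series playing the role of the rational series and the coefficients $d_j(a)$ being small adjustments of the explicit $c_j$. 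The kernel estimates of Proposition~\ref{keyest} restricted to $x\le-a^{-1}$ bound the remainder by $c_N a(1+|ax|)^{-3N-3}$.

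The principal obstacle is proving uniformity of $c_N$ and $d_j(a)$ as $a\to 0$: the dispersive recursion produces factorially growing factors $(1+\gamma+3k)(2+\gamma+3k)(3+\gamma+3k)\sim (3k)^3$ that are only partially compensated by the $1/(3(k+1))$ in front, so the formal series is only asymptotic (Gevrey of appropriate order) and not convergent. For each fixed $N$ this is harmless and produces the required uniform bound, which is precisely the content of the proposition.
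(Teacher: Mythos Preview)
The paper does not give a separate proof of this proposition; it is stated as a corollary of the iterative analysis in Section~\ref{sec:expansion} (``The argument above gives information on the asymptotics\dots''). Your plan is essentially the natural way to make that sentence precise, and the core computation is right: the identity
\[
\bigl[a((1+\gamma)+x\partial_x)+\partial_x\bigr]\,(1+ax)^{-1-\gamma-3j}=-3ja\,(1+ax)^{-1-\gamma-3j}
\]
is exactly what drives the triangular recursion, and Proposition~\ref{keyest} (with $k$ chosen large) together with Proposition~\ref{invertibility} controls the remainder in the weighted space $(1+x)^{k}(1+ax)^{-1-\gamma-k}$. This is the same mechanism as the bootstrap in Section~7.2, where the paper iterates the integral representation to push the exponent on $w_R$ from $-1-\gamma$ to $-(p+1)(1+\gamma)-1$; you have just packaged the iteration into a single ansatz-plus-remainder step.

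One point needs correcting. For $x<-2a^{-1}$ you say the nonlinear term $(u^{p+1})_x$ is ``superalgebraically small relative to the scaling operator\dots because $u$ inherits its rapid decay from $\Hi_\gamma$''. That is not the right reason: in this region $\Hi_\gamma(a^{-2/3}\xi)$ decays only \emph{algebraically} in $\xi$ (the display before~\eqref{hiasym}), so $u^{p+1}$ is algebraic in $x$ as well. What actually suppresses the nonlinearity on the left is the \emph{prefactor}: since $u\sim(\Hi_\gamma(a^{-2/3}))^{-1}|\xi|^{-1-\gamma}$, the nonlinear term carries an extra factor $(\Hi_\gamma(a^{-2/3}))^{-p}$, which is exponentially small in $1/a$. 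This is why the $d_j$ on the left are, up to exponentially small corrections, simply the coefficients $\Gamma(1+\gamma+3j)/(3^jj!\pi)$ of the $\Hi_\gamma$ series, rescaled. Once you phrase it this way, your remainder argument via the left-side kernel bounds of Lemma~\ref{kernelest} goes through.
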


\section{A numerical simulation} 

The selfsimilar solutions have been computed numerically by N. Strunk in his diploma thesis.  
The first curve shows $1/p$ as a function of $a$. There is a small artefact near $a=0.1$. 

\begin{center} 



\printbibliography

\end{document}